\documentclass[preprint,12pt]{elsarticle}
\usepackage{geometry}
\usepackage{graphicx,psfrag, times}
\usepackage{float}
\usepackage{rotating}
\usepackage{epsfig}
\usepackage{epstopdf}
\usepackage{amsmath}
\usepackage{amssymb}
\usepackage{mathtools}
\usepackage{mathrsfs}
\usepackage{bm}
\usepackage{esint}
\usepackage{caption}
\usepackage{subcaption}
\usepackage{amsthm}
\newtheorem{definition}{Definition}[section]
\newtheorem{Proposition}{Proposition}[section]
\newtheorem{Theorem}{Theorem}[section]

\newtheorem{Lemma}{Lemma}[section]
\newtheorem{Remark}{Remark}[section]
\numberwithin{equation}{section}
\DeclareCaptionFormat{linenumber}{\begingroup\internallinenumbers#1#2#3\endgroup}
\usepackage{enumitem}
\usepackage{booktabs}
\usepackage{multirow}
\usepackage{longtable}
\usepackage{makecell}
\usepackage[table]{xcolor}
\usepackage{color}
\usepackage{pifont}
\usepackage{appendix}
\usepackage{comment}\usepackage{algorithm}
\usepackage{algpseudocode}
\usepackage{lipsum}
\setcitestyle{citesep={,},aysep={}}
\usepackage{breakcites}
\usepackage[colorlinks=true,breaklinks=true]{hyperref}
\AtBeginDocument{%
    \hypersetup{
        citecolor=blue,
        linkcolor=blue,
        urlcolor=blue,
    }%
}

\begin{document}
\begin{frontmatter}
\title{Double-Phase Neumann Problems with Variable Growth on Metric Measure Spaces}
\author[inst1]{Ritul Dutta\fnref{equal}}
\ead{rituldutta1999@gmail.com}
\affiliation[inst1]{organization={Department of Mathematics, Indian Institute of Technology Ropar},
           city={Rupnagar},
            postcode={140001},
            state={Punjab},
            country={India}}

\author[inst2]{M. Ashraf Bhat\fnref{equal}}
\ead{ashraf74267@gmail.com}
\affiliation[inst2]{organization={National Institute of Electronics and Information Technology Ropar},
           city={Rupnagar},
            postcode={140001},
            state={Punjab},
            country={India}}
            
\author[inst1]{ G. Sankara Raju Kosuru\corref{cor1}}
\ead{raju@iitrpr.ac.in}
\cortext[cor1]{Corresponding author}

\fntext[equal]{These two authors contributed equally to this work.}

\begin{abstract}
We study a variable-exponent double-phase Neumann problem on a domain in a metric measure space, where the boundary measure satisfies an upper codimension-\(\theta\) bound for \(0\le\theta<1\). Under a local comparability condition on the double-phase growth function, we establish a modular estimate for a fractional maximal operator and use it to prove the existence and boundedness of the associated trace operator. We then establish coercivity of the associated energy functional on the normalized double-phase Newtonian space and, under reflexivity, prove the existence of minimizers. We further show that the set of minimizers is closed and convex, and that any two minimizers have the same minimal weak upper gradient almost everywhere. Finally, we prove the stability of minimizers under perturbations of the Neumann data.
\end{abstract}

\begin{keyword}
Double-phase growth \sep Newtonian spaces \sep Neumann problems \sep Metric measure spaces \sep Trace operator \sep Minimizers
\MSC[2020] 46E36 \sep 46E35 \sep 49J27 \sep 30L99
\end{keyword}

\end{frontmatter}

%%%%%% Main Text %%%%%%

\section{Introduction}
The study of nonlinear partial differential equations with nonstandard growth has developed significantly over the past several decades (see, for example, \cite{BaroniColomboMingione2018, ColomboMingione2015, ByunOh2020, Marcellini1989, Marcellini1991, zbMATH05711925} and references therein). Such growth conditions arise naturally in models describing heterogeneous media, where the energy density may vary spatially and cannot be adequately described by a single fixed power. A fundamental example of nonstandard growth is the double-phase functional
\[\mathcal{H}(x,\xi)=\xi^{p(x)}+a(x)\xi^{q(x)},\qquad 1<p(x)<q(x)<\infty.\]
The double-phase model was first introduced for constant exponents (\(p(x)=p\) and \(q(x)=q\)) in \cite{Zhikov}. It exhibits two distinct growth regimes depending on the behaviour of \(a(\cdot)\) and is closely related to the Lavrentiev phenomenon \cite{zbMATH01206281}. More generally, the development of variable-exponent and Musielak–Orlicz spaces has provided a flexible framework for studying variational integrals with spatially variable growth. Comprehensive accounts of these spaces and their applications can be found in \cite{DieningHarjulehtoHastoRuzicka2011, FanZhao2001, Musielak1983,KovacikRakosnik1991,HarjulehtoHasto2019}. To study such variational problems in a non-Euclidean setting, Newtonian spaces provide a natural counterpart of Sobolev spaces on metric measure spaces, with weak upper gradients replacing classical gradients \cite{HeinonenKoskela1998, Shanmugalingam2000, BjornBjorn2011,zbMATH01535638}. Variable-exponent and Musielak–Orlicz versions of Newtonian spaces have also been developed \cite{Harjulehto2007,Ohno2015}.

Neumann problems have been extensively studied in the Euclidean setting (see, for instance, \cite{KapustyanKogut2010, RyzhakovSetukha2011,zbMATH05770527,zbMATH07040479,zbMATH08182069} and references therein), as well as in more general geometric settings, including Carnot groups \cite{Nhieu2001} and the Heisenberg group \cite{DubeyKumarMishra2016}. More recently, variational approaches to Neumann problems have been developed in metric measure spaces for the \(p\)-Laplacian \cite{Maly2018,Nastasi2022}. Motivated by these developments, we consider a double-phase Neumann problem with variable growth on a metric measure space \((X,d,\mu)\). Formally, in the Euclidean setting, the corresponding problem is given by
\begin{equation*}
\begin{cases}
\displaystyle
\operatorname{div}\left(
|\nabla u|^{p(x)-2}\nabla u
+a(x)|\nabla u|^{q(x)-2}\nabla u
\right)=0,
& \text{in }\Omega,\\[1.2ex]
\displaystyle
-\left(
|\nabla u|^{p(x)-2}\nabla u
+a(x)|\nabla u|^{q(x)-2}\nabla u
\right)\cdot\eta=f,
& \text{on }\partial\Omega,
\end{cases}
\end{equation*}
where \(\Omega\subset\mathbb{R}^N\), \(N\geq2\), is a bounded Lipschitz domain and \(a\) is a non-negative bounded function. The associated energy functional is
\begin{equation*}
\mathcal{J}_f(u)
=
\int_{\Omega}
\left(
\frac{|\nabla u|^{p(x)}}{p(x)}
+a(x)\frac{|\nabla u|^{q(x)}}{q(x)}
\right)\,dx
+\int_{\partial\Omega}fu\,dS.
\end{equation*}
The metric formulation of such a Neumann problem is substantially more delicate than its Euclidean counterpart, as it requires a compatible treatment of the metric geometry, variable double-phase growth, and boundary measure. Unlike the metric \(p\)-Laplacian setting considered in \cite{Maly2018}, where the boundary term is formulated with respect to the perimeter measure, our formulation allows for a boundary measure satisfying a codimension-\(\theta\) condition with \(\theta\in[0,1)\). The boundary functional associated with the Neumann datum must be well defined and continuous on the double-phase Newtonian space \(N^{1,\mathcal H}(\Omega)\). Consequently, a trace estimate compatible with the variable double-phase modular is needed to formulate the associated variational problem. To establish such an estimate, controlling the local behavior of \(\mathcal H\) becomes essential. We impose a local comparability condition on \(\mathcal H\), which allows the variable-growth modular to interact with a fractional maximal operator. The trace estimate allows us to formulate the variational problem. As in the classical Neumann problem, the energy functional is invariant under the addition of constants. We therefore work on the normalized space
\[
N^{1,\mathcal H}_*(\Omega)
=
\left\{
u\in N^{1,\mathcal H}(\Omega):
\int_\Omega u\,d\mu=0
\right\}.
\]
Assuming the appropriate global \(\mathcal H\)-Poincar\'e inequality, we prove that the energy is coercive on the normalized space. This coercivity allows us to apply the direct method of the calculus of variations. Under the additional assumption that \(N^{1,\mathcal H}(\Omega)\) is reflexive, we obtain the existence of a minimizer. The variational problem also exhibits an interesting nonuniqueness structure. Although the minimizer need not be unique, its minimal \(\mathcal H\)-weak upper gradient is uniquely determined \(\mu\)-almost everywhere. More precisely, we prove that the set of minimizers is convex and norm-closed and that any two minimizers possess the same minimal \(\mathcal H\)-weak upper gradient, \(\mu\)-almost everywhere. Finally, we study the stability of the variational problem under perturbations of the Neumann datum, showing that the variational framework retains a meaningful continuity property with respect to the boundary data.

 The paper is organized as follows. Section~2 recalls the necessary preliminaries on metric measure spaces, double-phase energies, weak upper gradients, and Newtonian spaces. Section~3 is devoted to the maximal-function and trace estimates. In Section~4, we establish the variational results, including coercivity, existence, and the structure of minimizer set. Section~5 deals with the stability of minimizers under perturbations of the boundary data.

\section{Preliminaries and Auxiliary Results}
Throughout this paper, we assume that \((X,d,\mu)\) is a complete metric measure space, where \(\mu\) is a \(\sigma\)-finite Borel regular measure on \(X\) such that \(0<\mu(B)<\infty\) for every open ball \(B:=B(x,r)\subset X\), where \(x\in X\) and \(r>0\). We say that the measure \(\mu\) is doubling on \(X\) if there exists a constant \(C\ge 1\) such that \(\mu(2B)\le C\mu(B)\) for every ball \(B\subset X\). We do not a priori assume that \(\mu\) is doubling on \(X\). However, for a given domain \(\Omega\subset X\), we require \(\mu\) to be non-trivial in the sense that \(0<\mu(B(x,r)\cap\Omega)<\infty\) for every \(x\in\overline{\Omega}\) and every \(r>0\), and to satisfy
the following restricted doubling condition.
\begin{definition} \label{def:restricted-doubling}
Let \(\Omega\subset X\) be a domain. We say that \(\mu\) satisfies the restricted doubling condition on \(\Omega\) if there exists a constant \(C\ge 1\) such that \(\mu(B(x,2r)\cap\Omega) \le C\mu(B(x,r)\cap\Omega)\) for every \(x\in\overline{\Omega}\) and every \(r>0\).
\end{definition}
From now on, we assume that \(\Omega\) is a domain in \(X\) for which \(\mu\) is non-trivial and satisfies the restricted doubling condition in the above sense. Let \(\mathscr{H}\) be a \(\sigma\)-finite Borel regular measure on \(\partial\Omega\). For \(0\le\theta<1\), we say that \(\mathscr{H}\) satisfies an upper codimension-\(\theta\) condition if there exists a constant \(C_\theta>0\) such that
\begin{equation}\label{eq:upper-codimension}
\mathscr{H}\bigl(B(x,r)\cap\partial\Omega\bigr)
\le
C_\theta\,
\frac{\mu\bigl(B(x,r)\cap\Omega\bigr)}{r^\theta}
\end{equation}
for every \(x\in\partial\Omega\) and every \(r>0\). For more details, we refer to \cite{Maly2019}. In the remainder of the paper, \(C\) denotes a generic positive constant whose value may change from one occurrence to another.
\subsection{Double-phase Newtonian Spaces}
Let \(E \subset X\) be measurable. Consider the double-phase functional \(\mathcal{H}:E\times [0,\infty)\to [0,\infty) \), defined by, \(\mathcal{H}(x,\xi)=\xi^{p(x)}+a(x)\xi^{q(x)}\), where we assume the following conditions:
\begin{equation}\label{eq 2.2}
\left\{
\begin{aligned}
&p,q:E\to[1,\infty)\ \text{are measurable functions satisfying}\\
&1<p^{-}:=\operatorname*{ess\,inf}_{x\in E}p(x)
\le p(x)\le p^{+}:=\operatorname*{ess\,sup}_{x\in E}p(x)<\infty,\\
&1<q^{-}:=\operatorname*{ess\,inf}_{x\in E}q(x)
\le q(x)\le q^{+}:=\operatorname*{ess\,sup}_{x\in E}q(x)<\infty,\\
&0\le a(\cdot)\in L^{\infty}(E),\\
&1<p(x)<q(x)<\infty,
~~ \text{for all }x\in E.
\end{aligned}
\right.
\end{equation}
From now on, we assume that the hypothesis in \eqref{eq 2.2} hold. Consequently, \(\mathcal{H}(x,\xi)\) satisfies conditions \((\Phi1) - (\Phi4)\) of \cite{FutamuraShimomura2021, OhnoShimomura2025}. Therefore, the corresponding double-phase Musielak--Orlicz space is  \[L^{\mathcal H}(E):=\left\{u\in M(E):\varrho_{\mathcal H}(u)<\infty\right\},\]
where \[\varrho_{\mathcal H}(u)=\int_E {\mathcal H}(x,|u|)\,d\mu\] and \(L^{\mathcal H}(E)\) is endowed with the Luxemburg norm 
\[\|u\|_{L^{\mathcal H}(E)}=\inf\left\{\tau>0:\varrho_{\mathcal H}\left(\frac{u}{\tau}\right)\le 1\right\}.\] 
Under this norm, \(L^{\mathcal H}(E)\) is a reflexive Banach space (see \cite[Theorem 1.3 and Theorem 1.8]{kaminska2022uniform}). The complementary function of \(\mathcal H\) is denoted by \(\widetilde{\mathcal H}\) and is defined by
\[
\widetilde{\mathcal H}(x,\xi)
:=
\sup_{s\ge0}
\left\{
s\xi-\mathcal H(x,s)
\right\},
~~ x\in E,~~ \xi\ge0.
\]
The corresponding complementary Musielak--Orlicz space is
\[
L^{\widetilde{\mathcal H}}(E)
:=
\left\{
u\in M(E):
\varrho_{\widetilde{\mathcal H}}(u)<\infty
\right\},
\]
where
\[
\varrho_{\widetilde{\mathcal H}}(u)
=
\int_E
\widetilde{\mathcal H}(x,|u|)\,d\mu.
\]
The space \(L^{\widetilde{\mathcal H}}(E)\) is equipped with the Luxemburg norm
\[
\|u\|_{L^{\widetilde{\mathcal H}}(E)}
=
\inf\left\{
\tau>0:
\varrho_{\widetilde{\mathcal H}}
\left(\frac{u}{\tau}\right)\le1
\right\}.
\]
The spaces \(L^{\mathcal H}(E)\) and
\(L^{\widetilde{\mathcal H}}(E)\) are related through the generalized
Hölder inequality (see \cite{kaminska2022uniform}). In particular, for
\(f\in L^{\mathcal H}(E)\) and
\(g\in L^{\widetilde{\mathcal H}}(E)\),
\[
\int_E |f(x)g(x)|\,d\mu(x)
\le
2\|f\|_{L^{\mathcal H}(E)}
\|g\|_{L^{\widetilde{\mathcal H}}(E)}.
\]
The following modular estimates will be useful in the sequel (see \cite[Proposition 2.13]{Crespo2022}).

\begin{Lemma}\label{ModEst}
Let \(u\in L^{\mathcal H}(E)\). Then the following modular estimates hold.
\begin{enumerate}[label=(\roman*)]\setlength{\itemsep}{0pt}
\item\(\|u\|_{L^{\mathcal{H}}(E)}=\lambda \ne 0 ~~\text{if and only if}~~ \varrho_{\mathcal H} \!\left(\frac{u}{\lambda}\right)=1.\)
\item\(\|u\|_{L^{\mathcal H}(E)}<1 \;(\mathrm{resp.}>1,=1) ~~\text{if and only if}~~ \varrho_{\mathcal H}(u)<1 \;(\mathrm{resp.}>1,=1).\)
\item If \(\|u\|_{L^{\mathcal H}(E)}<1\), then
\(\|u\|_{L^{\mathcal H}(E)}^{q^{+}} \le \varrho_{\mathcal H}(u) \le
\|u\|_{L^{\mathcal H}(E)}^{p^{-}}\).
\item If \(\|u\|_{L^{\mathcal H}(E)}>1\), then
\(\|u\|_{L^{\mathcal{H}}(E)}^{p^{-}} \le \varrho_{\mathcal{H}}(u)
\le \|u\|_{L^{\mathcal H}(E)}^{q^{+}}\).
\item
\( \|u\|_{L^{\mathcal H}(E)}\to0 ~~\text{if and only if}~~ \varrho_{\mathcal H}(u)\to 0\).

\end{enumerate}
\end{Lemma}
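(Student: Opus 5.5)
The whole lemma rests on one scaling inequality for the modular, obtained pointwise from the structure of \(\mathcal H\). For \(x\in E\), \(\xi\ge0\) and \(t\ge1\) we have
\[
t^{p(x)}\xi^{p(x)}+a(x)t^{q(x)}\xi^{q(x)}\le t^{q^{+}}\mathcal H(x,\xi),
\]
and in the other direction \(\mathcal H(x,t\xi)\ge t^{p^{-}}\mathcal H(x,\xi)\), because \(p^{-}\le p(x)<q(x)\le q^{+}\) and \(a\ge0\). For \(0<t\le1\) the same comparison gives
\[
t^{q^{+}}\mathcal H(x,\xi)\le \mathcal H(x,t\xi)\le t^{p^{-}}\mathcal H(x,\xi).
\]
Integrating over \(E\), for every \(u\in L^{\mathcal H}(E)\) we obtain
\[
\min\{t^{p^{-}},t^{q^{+}}\}\,\varrho_{\mathcal H}(u)\le \varrho_{\mathcal H}(tu)\le \max\{t^{p^{-}},t^{q^{+}}\}\,\varrho_{\mathcal H}(u).
\]
This is the basic tool; all five items follow from it.

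\textbf{Item (i).} Consider \(\phi(\tau)=\varrho_{\mathcal H}(u/\tau)\) on \((0,\infty)\). The scaling inequality shows that \(\phi\) is finite for every \(\tau>0\), since \(u\in L^{\mathcal H}(E)\). It is also nonincreasing, because \(\xi\mapsto\mathcal H(x,\xi)\) is increasing. It is continuous: dominated convergence applies on compact subintervals, with the scaling bound supplying an integrable majorant. Moreover, if \(u\neq0\) then \(\phi\) is strictly decreasing, since \(\mathcal H(x,\cdot)\) is strictly increasing and \(\{u\neq0\}\) has positive measure. Finally, by the scaling inequality \(\phi(\tau)\to\infty\) as \(\tau\to0^{+}\) and \(\phi(\tau)\to0\) as \(\tau\to\infty\). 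Hence there is a unique \(\tau_0\) with \(\phi(\tau_0)=1\), and the infimum defining the Luxemburg norm equals \(\tau_0\), because \(\phi\le1\) exactly on \([\tau_0,\infty)\). This gives (i).

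\textbf{Item (ii).} With \(\lambda=\|u\|\) and strict monotonicity of \(\phi\), (ii) is immediate: \(\varrho_{\mathcal H}(u)=\phi(1)\) is \(<1\), \(=1\) or \(>1\) according as \(1>\lambda\), \(1=\lambda\) or \(1<\lambda\). For \(u=0\) the statement is trivial.

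\textbf{Items (iii) and (iv).} Write \(u=\lambda\,(u/\lambda)\) with \(\varrho_{\mathcal H}(u/\lambda)=1\) from (i), and apply the scaling inequality with \(t=\lambda\). If \(\lambda<1\) this gives \(\lambda^{q^{+}}\le\varrho_{\mathcal H}(u)\le\lambda^{p^{-}}\). If \(\lambda>1\) it gives \(\lambda^{p^{-}}\le\varrho_{\mathcal H}(u)\le\lambda^{q^{+}}\).

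\textbf{Item (v).} This follows from (iii) together with (ii). Once either \(\|u_n\|\) or \(\varrho_{\mathcal H}(u_n)\) is below \(1\), both are, and then \(\|u_n\|^{q^{+}}\le\varrho_{\mathcal H}(u_n)\le\|u_n\|^{p^{-}}\), so one tends to zero if and only if the other does.

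The only step needing care is the continuity and strict monotonicity of \(\phi\) in (i). This is where the finiteness \(q^{+}<\infty\) (which gives the \(\Delta_2\)-type bound) and the dominated convergence argument enter. Everything else is algebra from the pointwise scaling inequality.
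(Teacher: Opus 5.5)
Your proof is correct and complete. The paper gives no argument for this lemma: it imports it from \cite[Proposition 2.13]{Crespo2022}, a result formulated for Lebesgue measure on Euclidean domains.

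Your route is the standard self-contained proof. It rests on the two-sided pointwise bound
\[
\min\{t^{p^-},t^{q^+}\}\,\mathcal H(x,\xi)\le \mathcal H(x,t\xi)\le \max\{t^{p^-},t^{q^+}\}\,\mathcal H(x,\xi),
\]
followed by continuity, strict monotonicity and the limits at \(0^+\) and \(\infty\) of \(\tau\mapsto\varrho_{\mathcal H}(u/\tau)\). Its advantage over the citation is that it makes the hypotheses transparent. It uses only \(1<p^-\), \(q^+<\infty\), \(p<q\) and \(a\ge0\) pointwise a.e., with no continuity of the exponents and no structure of the underlying measure. So it applies verbatim on \((E,\mu)\) with merely measurable exponents. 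It also applies on \((\partial\Omega,\mathscr H)\), which is where the paper actually invokes the lemma at the end of the proof of Theorem~\ref{BTO}; the citation leaves that transfer implicit.
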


The notions of upper gradients and Newtonian spaces on metric measure spaces are standard. We recall the relevant definitions in the present double-phase setting (see \cite{zbMATH06625236, zbMATH06225496,  Ohno2015, FutamuraShimomura2021} and references therein). By a rectifiable curve \(\gamma\) in \(X\), we mean a nonconstant continuous map
\(\gamma:[0,\ell_\gamma]\to X\), where \(\ell_\gamma\) is its length. We denote by \(\Gamma(E)\) the family of all rectifiable curves contained in \(E\).

\begin{definition}
Let \(\Gamma\subset\Gamma(X)\). Denote by \(\mathcal F(\Gamma)\) the collection of all Borel measurable functions \(h:X\to[0,\infty]\) such that \(\int_\gamma h ds\ge 1 \) for every \(\gamma\in\Gamma\), where \(ds\) denotes the arc-length element along \(\gamma\). The \(\mathcal H\)-modulus of \(\Gamma\) is defined by
\[ \operatorname{Mod}_{\mathcal H}(\Gamma) := \inf_{h\in\mathcal F(\Gamma)}\varrho_{\mathcal H}(h).\]
If \(\mathcal F(\Gamma)=\varnothing\), we set
\(\operatorname{Mod}_{\mathcal H}(\Gamma)=\infty\). We say that a property holds for \(\mathcal H\)-almost every \(\gamma\in\Gamma(E)\) if it holds for all \(\gamma\in\Gamma(E)\setminus\Gamma_0\) for some family \(\Gamma_0\subset\Gamma(X)\) satisfying \(\operatorname{Mod}_{\mathcal H}(\Gamma_0)=0\).
\end{definition}

\begin{definition}
Let \(u:E \to [-\infty, \infty]\) be a measurable function. A  non-negative Borel  function g on \(E\)  is said to be an \(\mathcal{H}\)-weak upper  gradient of \(u\) in \(E\) if \[|u(\gamma(0))-u(\gamma(l_\gamma))|\le\int_\gamma g\,ds\] for \(\mathcal{H}\)-almost every \(\gamma\in \Gamma(E)\). Here, we use the convention that \(|(\pm \infty) - (\pm \infty)|= \infty\).
\end{definition}
If \(u\) has an \(\mathcal{H}\)-weak upper gradient in \(L^\mathcal{H}(E)\), then there exists a minimal \(\mathcal{H}\)-weak upper gradient \(g_u\in L^\mathcal{H}(E)\) of \(u\),
that is, \(g_u\le g\) \(\mu\)-a.e. for every \(\mathcal{H}\)-weak upper gradient \(g\in L^\mathcal{H}(E)\) of \(u\) in \(E\). Moreover, \(g_u\) is unique up to sets of measure zero (\cite[Theorem 4.6]{zbMATH06225496}). 

Let \(\tilde{N}^{1,\mathcal{H}}(E)\) be the set of all functions \(u\in L^\mathcal{H}(E)\) for which there exists an \(\mathcal{H}\)-weak upper gradient \(g\in L^\mathcal{H}(E)\) of \(u\) in \(E\). For \(u\in\tilde{N}^{1,\mathcal{H}}(E)\), we define
\[
\|u\|_{\tilde{N}^{1,\mathcal{H}}(E)}
=
\|u\|_{L^\mathcal{H}(E)}
+
\inf \|g\|_{L^\mathcal{H}(E)},
\]
where the infimum is taken over all \(\mathcal{H}\)-weak upper gradients \(g\) of \(u\) in \(E\). By the existence and minimality of \(g_u\), the above functional can equivalently be written as (see
\cite[Corollary 4.9]{zbMATH06225496})
\[
\|u\|_{\tilde{N}^{1,\mathcal{H}}(E)}
=
\|u\|_{L^\mathcal{H}(E)}
+
 \|g_u\|_{L^\mathcal{H}(E)}.
\]
The functions in \(\tilde{N}^{1,\mathcal{H}}(E)\) are assumed to be defined everywhere, rather than only almost everywhere. The functional \(\|\cdot\|_{\tilde{N}^{1,\mathcal{H}}(E)}\) is only a seminorm. The double-phase Newtonian space \(N^{1,\mathcal{H}}(E)\) is defined as the space of equivalence classes of functions in \(\tilde{N}^{1,\mathcal{H}}(E)\), that is,
\[
N^{1,\mathcal{H}}(E)
=
\tilde{N}^{1,\mathcal{H}}(E)/\sim,
\]
where \(u\sim v\) if and only if \(\|u-v\|_{\tilde{N}^{1,\mathcal{H}}(E)}=0\). We write \(\|u\|_{N^{1,\mathcal{H}}(E)}\) for the norm of \(u \in N^{1,\mathcal{H}}(E)\). The space \(N^{1,\mathcal{H}}(E)\), equipped with this norm, is a Banach space (\cite[Theorem 7.1]{zbMATH06625236}). 

Let \(L^1_{\mathrm{loc}}(E)\) be the space of functions on \(E\) that are integrable on bounded subsets of \(E\). For any set \(A \subset X\) satisfying \(0<\mu(A)<\infty\), we write 
\[\fint_{A} u\,d\mu= u_{A}=\frac{1}{\mu(A)}\int_{A} u\,d\mu.\]

\begin{definition}\label{Poincare inq}
Let \(u\in L^1_{\mathrm{loc}}(E)\) and let \(g\) be an \(\mathcal{H}\)-weak upper gradient of \(u\) in \(E\). We say that the pair \((u,g)\) satisfies a weak \((1,1)\)-Poincar\'e inequality on \(E\) if there exist constants
\(C>0\) and \(\lambda >1\) such that
\[\fint_{B}|u-u_B|\,d\mu \le C\,r \fint_{\lambda B}g\,d\mu \]
for every ball $B \subset E$. If the above holds for \(g=g_u\), we simply say that \(E\) supports the weak \((1,1)\)-Poincar\'e inequality.
\end{definition}
If the pair \((u,g)\) satisfies the weak \((1,1)\)-Poincar\'e inequality on \(\Omega\), one also has
\[
\fint_{B\cap\Omega}|u-u_{B\cap\Omega}|\,d\mu
\le
C r\fint_{\lambda B\cap\Omega}g\,d\mu,
\]
for every ball \(B\) centered at a point of \(\overline{\Omega}\) (\cite[Remark~2.13]{Maly2019}). Whenever we assume that \(\Omega\) supports the weak \((1,1)\)-Poincar\'e inequality, we shall also freely use this consequence.
\begin{definition}
We say that \(E\) satisfies the global \(\mathcal H\)-Poincaré inequality if there exists a constant \(C>0\) such that
\(\|u-u_E\|_{L^{\mathcal H}(E)} \le C\|g_u\|_{L^{\mathcal H}(E)} \) for every \(u\in N^{1,\mathcal H}(E)\).
\end{definition}
For \(u\in L^1_{\mathrm{loc}}(E)\), we define the modified maximal operator \(\mathcal M^E_\lambda\) by
\[
\mathcal M^E_\lambda u(x)
=
\sup_{r>0}
\frac{1}{\mu(B(x,\lambda r)\cap E)}
\int_{B(x,r)\cap E}|u(y)|\,d\mu(y),
~~ x\in E,
\]
where the supremum is taken over all \(r>0\) for which \(\mu(B(x,\lambda r)\cap E)>0\). When \(\lambda=1\), this reduces to the usual Hardy--Littlewood maximal operator on \(E\), which we simply denote by \(\mathcal M^E\). The boundedness of the modified maximal operator on \(L^{\mathcal H}\) is well known (see, for example, \cite[Theorem 6.1]{OhnoShimomura2025} and \cite[corollary 3.4]{OhnoShimomura2018}).
\begin{Theorem}
Let \(\Omega \subset X\) be bounded. Assume that \(\Omega\) supports a weak \((1,1)\)-Poincar\'e inequality, and \(\mathcal M^{\Omega}_\lambda\) is bounded on \(L^{\mathcal H}(\Omega)\). Then \(\Omega\) satisfies the global \(\mathcal H\)-Poincar\'e inequality.
\end{Theorem}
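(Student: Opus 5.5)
Since \(\Omega\) is bounded, take \(R=\operatorname{diam}\Omega\). The plan is to compare \(u\) pointwise with its mean \(u_\Omega\) using a chaining (telescoping) argument, which produces a Riesz-type potential of \(g_u\). For bounded \(\Omega\), this potential is in turn dominated by \(R\,\mathcal M^\Omega_\lambda g_u\), possibly up to a logarithmic factor that must be avoided. The hypothesis that \(\mathcal M^{\Omega}_\lambda\) is bounded on \(L^{\mathcal H}(\Omega)\) then gives the norm inequality.

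\textbf{Step 1: telescoping chain.} Fix \(u\in N^{1,\mathcal H}(\Omega)\). Since \(\Omega\) is bounded and \(\mu(\Omega)<\infty\), \(L^{\mathcal H}(\Omega)\hookrightarrow L^1(\Omega)\) (use \(\xi\le 1+\xi^{p^-}\le 1+\mathcal H(x,\xi)\)), so \(u,g_u\in L^1(\Omega)\) and \(u_\Omega\) is defined. Fix \(x_0\in\Omega\) and let \(B_0=B(x_0,2R)\), so that \(B_0\cap\Omega=\Omega\). For a Lebesgue point \(x\in\Omega\) of \(u\), set \(B_i=B(x,2^{-i}R)\), \(i\ge0\), and \(B_{-1}=B_0\) (note \(B_0'\supset\Omega\) as well). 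Telescoping gives
\[
|u(x)-u_\Omega|\le \sum_{i\ge -1}|u_{B_{i+1}\cap\Omega}-u_{B_i\cap\Omega}|.
\]
Each term is bounded, via the restricted doubling condition, by \(C\fint_{B_i\cap\Omega}|u-u_{B_i\cap\Omega}|\,d\mu\). The Poincaré consequence stated after Definition~\ref{Poincare inq} then bounds this by
\[
C\,2^{-i}R\fint_{\lambda B_i\cap\Omega}g_u\,d\mu .
\]

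\textbf{Step 2: pointwise maximal bound.} Using restricted doubling again, bound each average by \(C\,\mathcal M^\Omega_\lambda g_u(x)\); equivalently, enlarge the averaging ball so it matches the normalization \(\mu(B(x,\lambda r)\cap\Omega)\) in the definition. Summing the geometric series \(\sum 2^{-i}R\) yields
\[
|u(x)-u_\Omega|\le C R\,\mathcal M^{\Omega}_\lambda g_u(x)\qquad\text{for }\mu\text{-a.e. }x\in\Omega .
\]
Because the radii decay geometrically, this sidesteps the logarithmic issue. Some care is needed with the sup-versus-average normalization in \(\mathcal M^\Omega_\lambda\), which may require passing to \(\mathcal M^\Omega_{\lambda'}\) with a possibly different \(\lambda\), and with Lebesgue points under restricted doubling, which holds on \(\overline\Omega\). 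Lebesgue differentiation holds a.e. under restricted doubling.

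\textbf{Step 3: norm estimate.} The Luxemburg norm is monotone under pointwise domination (a lattice norm), so
\[
\|u-u_\Omega\|_{L^{\mathcal H}(\Omega)}\le CR\,\|\mathcal M^\Omega_\lambda g_u\|_{L^{\mathcal H}(\Omega)}\le C'R\,\|g_u\|_{L^{\mathcal H}(\Omega)},
\]
which is the global \(\mathcal H\)-Poincaré inequality.

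The main obstacle is Step 2: controlling the chain terms with exactly the operator \(\mathcal M^\Omega_\lambda\) from the hypothesis. The dilation \(\lambda\) in the Poincaré inequality and the normalization \(\mu(B(x,\lambda r)\cap\Omega)\) have to match. I would handle this by rewriting \(\fint_{\lambda B_i\cap\Omega}g_u\) as an integral over \(B(x,\lambda 2^{-i}R)\cap\Omega\) and using doubling to compare \(\mu(B(x,\lambda^2 2^{-i}R)\cap\Omega)\) with \(\mu(B(x,\lambda 2^{-i}R)\cap\Omega)\), absorbing the constant.
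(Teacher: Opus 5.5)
Your proposal is correct and follows essentially the same route as the paper. Both arguments run a dyadic telescoping chain of balls centered at a Lebesgue point $x$, starting from a ball whose trace on $\Omega$ is all of $\Omega$; apply the weak $(1,1)$-Poincar\'e inequality on each link; dominate each average by a maximal function using restricted doubling; sum the geometric series of radii; and conclude with the lattice property of the Luxemburg norm and the boundedness of $\mathcal M^\Omega_\lambda$. The only cosmetic difference is that you bound directly by $\mathcal M^\Omega_\lambda g_u$, using doubling to match the normalization $\mu(B(x,\lambda r)\cap\Omega)$, whereas the paper first bounds by $\mathcal M^\Omega g_u$ and then uses $\mathcal M^\Omega g_u\le C_\lambda\,\mathcal M^\Omega_\lambda g_u$.
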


\begin{proof}
Fix \(x\in \Omega\) and set \(r_0=2\operatorname{diam}(\Omega)\), \(B_0=B(x,r_0)\). Then \(\Omega\subset B_0\). For \(i\ge0\), let \(r_i=2^{-i}r_0,~~B_i=B(x,r_i)\cap \Omega\). For \(\mu\)-almost every \(x\in \Omega\), the Lebesgue differentiation theorem gives \(u_{B_i}\rightarrow u(x)~~\text{as }i\to\infty\). Consequently,
\[
|u(x)-u_{B_0}|
\le
\sum_{i=0}^{\infty}
|u_{B_{i+1}}-u_{B_i}|.
\]
Since \(B_{i+1}\subset B_i\), we have
\[
\begin{aligned}
|u_{B_{i+1}}-u_{B_i}|
&\le
\fint_{B_{i+1}}|u-u_{B_i}|\,d\mu\\
&\le
\frac{\mu(B_i)}{\mu(B_{i+1})}
\fint_{B_i}|u-u_{B_i}|\,d\mu.
\end{aligned}
\]
By the weak \((1,1)\)-Poincar\'e inequality with dilation constant \(\lambda'\),
\[
\fint_{B_i}|u-u_{B_i}|\,d\mu
\le
C r_i
\fint_{\lambda' B_i}g_u\,d\mu,
\]
where \(\lambda' B_i= B(x,\lambda r_i) \cap \Omega\). Therefore,
\begin{equation} \label{eq:u-uB0}
|u(x)-u_{B_0}|
\le
C r_0\mathcal M^{\Omega} g_u(x)~~\text{for \(\mu\)-a.e. \(x\in \Omega\)}.
\end{equation}
We next estimate \(|u_{B_0}-u_\Omega|\). Since \(\Omega\subset B_0\),
\[
\begin{aligned}
|u_{B_0}-u_\Omega|
&=
\left|
\frac{1}{\mu(\Omega)}
\int_\Omega (u_{B_0}-u)\,d\mu
\right|\\
&\le
\frac{1}{\mu(\Omega)}
\int_\Omega|u-u_{B_0}|\,d\mu\\
&\le
\frac{\mu(B_0)}{\mu(\Omega)}
\fint_{B_0}|u-u_{B_0}|\,d\mu.
\end{aligned}
\]
For a fixed \(x_0\in \Omega\), we have \(B(x,r_0)\subset B(x_0,2r_0)\). Thus,
\[
\frac{\mu(B_0)}{\mu(\Omega)}
\le
\frac{\mu(B(x_0,2r_0))}{\mu(\Omega)}
=:C_\Omega<\infty.
\]
Applying the weak \((1,1)\)-Poincar\'e inequality to \(B_0\), we obtain
\[
\begin{aligned}
|u_{B_0}-u_\Omega|
&\le
C_E C r_0
\fint_{\lambda' B_0}g_u\,d\mu\\
&\le
C r_0\mathcal M^{\Omega} g_u(x).
\end{aligned}
\]
Hence,
\begin{equation} \label{eq:u-uOmega-M}
|u(x)-u_E|
\le
C\,\mathcal M^{\Omega} g_u(x)  ~~\text{for \(\mu\)-a.e. \(x\in \Omega\)}.
\end{equation}
By the restricted doubling condition, for the fixed \(\lambda>1\), there exists
\(C_\lambda>0\) such that
\[
\mathcal M^{\Omega} g_u(x)
\le
C_\lambda\mathcal M^{\Omega}_\lambda g_u(x).
\]
Therefore, by \eqref{eq:u-uOmega-M},
\begin{equation}
|u(x)-u_\Omega|
\le
C\,\mathcal M^{\Omega}_\lambda g_u(x)~~\text{for \(\mu\)-a.e. \(x\in \Omega\)}.
\label{eq:pointwise-poincare}
\end{equation}
Taking the \(L^{\mathcal H}(\Omega)\)-norm in
\eqref{eq:pointwise-poincare}, we obtain
\[
\|u-u_\Omega\|_{L^{\mathcal H}(\Omega)}
\le
C
\|\mathcal M^{\Omega}_\lambda g_u\|_{L^{\mathcal H}(\Omega)}.
\]
By the boundedness of \(\mathcal M^{\Omega}_\lambda\) on \(L^{\mathcal H}(\Omega)\), we get
\[
\|u-u_\Omega\|_{L^{\mathcal H}(\Omega)}
\le
C\|g_u\|_{L^{\mathcal H}(\Omega)}.
\]
\end{proof}

\section{Existence and Boundedness of the Trace Operator}

To establish the existence and boundedness of the trace operator, we need the following local comparability condition for the double-phase modular \(\mathcal H\).
\begin{definition}\label{Local comparability property}
Let \(I\subset[0,\infty)\). The double-phase modular \(\mathcal H\) is said to satisfy the local comparability condition on \(E \times I\) if there exist constants \(C\ge1\) and \(r_0>0\) such that, for every ball \(B \subset X \) with \(\operatorname{rad}(B)<r_0\), every \(x,y\in B\cap E\), and every \(\xi\in I\),
\[
\mathcal H(x,\xi)\le C\,\mathcal H(y,C\xi).
\]
If \(I=[0,\infty)\), we simply say that \(\mathcal H\) satisfies the local comparability condition on \(E\).
\end{definition}

The local comparability condition is satisfied by double-phase modulars under some natural assumptions. If \(p(x) : \equiv p\) and \(q(x) : \equiv q\) are constant and there exists a constant \(C_a\ge1\) such that \(a(x)\le C_a a(y)\) whenever \(x,y\in B\cap E\) for every ball \(B\subset X\) with \(\operatorname{rad}(B)<r_0\), then \(\mathcal H\) satisfies the local comparability condition on \(E\). In particular, this condition is satisfied if
\(0<a_0\le a(x)\le a_1<\infty\). The following proposition records some other situations in which such conditions hold.

\begin{Proposition} \label{prop:local-comparability}
Assume that \(p\) and \(q\) are locally log-Hölder continuous, namely, there exist constants \(L_p,L_q,r_0>0\) such that
\[
|p(x)-p(y)|
\le
\frac{L_p}{\log\!\left(e+\frac{1}{d(x,y)}\right)} ~~\mbox{ and }
~~
|q(x)-q(y)|
\le
\frac{L_q}{\log\!\left(e+\frac{1}{d(x,y)}\right)}
\]
whenever \(x,y\in E\) and \(d(x,y)<r_0\). Further, assume that \(a\in C^{0,\alpha}(E), ~0<\alpha\le1\), with the Hölder constant \(L_a\). Then we have the following:
\begin{enumerate}
\item[(i)] For \(0<a_0\le a(x)\le a_1<\infty\), \(\mathcal H\) satisfies the local comparability condition on \(E\times I\), where \(I=[t_0, T],~~0 <t_0 \le T<\infty\).

\item[(ii)] Let \(U\subset E\) be open, \(X_0:=\{x\in U:a(x)>0\}\), and \(0<\theta<1\). Suppose that \(\theta\left(\frac{q(x)}{p(x)}-1\right)\le\alpha~~\text{for every }x\in X_0\). Then, for every \(\gamma>0\), there exist \(C_\gamma\ge1\) and \(r_1<\min\{1,r_0\}\) such that, whenever \(B\subset X\) is a ball with \(\operatorname{rad}(B)<\frac{r_1}{2}\), \(x,y\in B\cap U\), and \(\xi\ge 1\) satisfies \(\mathcal H(x,\xi)\le  \gamma r_0^{-\theta}\), we have \(\mathcal H(x,\xi)\le \mathcal H(y,C_\gamma\xi)\). In particular, if \[I:=\bigcap_{x\in U} \left\{\xi\ge1:\mathcal H(x,\xi)\le\gamma r_0^{-\theta}\right\},\] then \(\mathcal H\) satisfies the local comparability condition on \(U\times I\).

\end{enumerate}
\end{Proposition}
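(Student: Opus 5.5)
Both parts rest on the elementary splitting
\[
\frac{\mathcal H(x,\xi)}{\mathcal H(y,C\xi)}\le \frac{\xi^{p(x)}}{C^{p(y)}\xi^{p(y)}}+\frac{a(x)\xi^{q(x)}}{C^{q(y)}a(y)\xi^{q(y)}},
\]
so it suffices to bound the $p$-part and the $q$-part separately by constants. For each part we control the exponent differences by log-Hölder continuity and the coefficient ratio by the Hölder continuity of $a$. Throughout, $x,y$ lie in a ball of radius $<r$, so $d(x,y)<2r$. We choose the admissible radius small enough (below $r_0/2$ and below $1$) that the log-Hölder bounds apply.

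For (i), $\xi\in[t_0,T]$ is bounded above and below. Then $\xi^{p(x)-p(y)}\le \max\{T,t_0^{-1}\}^{|p(x)-p(y)|}\le \max\{T,t_0^{-1}\}^{p^+-p^-}$, which is a constant; the same holds for $q$. Moreover $a(x)/a(y)\le a_1/a_0$. Hence
\[
\mathcal H(x,\xi)\le C_1\bigl(\xi^{p(y)}+a(y)\xi^{q(y)}\bigr)=C_1\mathcal H(y,\xi)\le C_1\mathcal H(y,C_1\xi),
\]
using $C_1\ge1$ and monotonicity of $\mathcal H$ in $\xi$. Only boundedness of the exponents is needed here; log-Hölder continuity is not even required. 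I would simply record this.

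For (ii), $\xi\ge1$ is constrained only by $\mathcal H(x,\xi)\le\gamma r_0^{-\theta}$, so $\xi^{p(x)}\le \gamma r_0^{-\theta}$.

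\textbf{$p$-part.} We have $\xi^{p(x)-p(y)}\le \xi^{|p(x)-p(y)|}\le (\gamma r_0^{-\theta})^{L_p/(p^-\log(e+1/d(x,y)))}$, which is bounded by a constant depending on $\gamma,r_0,\theta,L_p,p^-$. (In the standard version with $r_0$ replaced by the ball radius this is the familiar log-Hölder trick $r^{-c/\log(1/r)}\le e^c$; with the fixed $r_0$ as literally stated it is even easier.) The same estimate handles $\xi^{q(x)-q(y)}$.

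\textbf{$q$-part, case $a(x)\le 2a(y)$.} Combine the ratio bound $a(x)/a(y)\le2$ with the exponent estimate for $q$.

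\textbf{$q$-part, case $a(x)>2a(y)$.} Here $a(x)\le 2(a(x)-a(y))\le 2L_a d(x,y)^\alpha$. Use $\theta(q(x)/p(x)-1)\le\alpha$ on $X_0$ (note $x\in X_0$ since $a(x)>0$):
\[
a(x)\xi^{q(x)}=a(x)\xi^{q(x)-p(x)}\xi^{p(x)}\le 2L_a d(x,y)^{\alpha}\bigl(\gamma r_0^{-\theta}\bigr)^{(q(x)-p(x))/p(x)}\xi^{p(x)}.
\]
The exponent on the middle factor is at most $\alpha/\theta$, and it is at this point that $d(x,y)^\alpha$ must absorb the growth. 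It does so provided the bound reads $(\gamma r^{-\theta})^{\alpha/\theta}=\gamma^{\alpha/\theta}r^{-\alpha}$ with $r\approx d(x,y)$. With the literal fixed $r_0$, the middle factor is bounded outright by $\max\{1,\gamma r_0^{-\theta}\}^{\alpha/\theta}$, since $r_0$ is fixed. Either way $a(x)\xi^{q(x)}\le C\xi^{p(x)}$, and this is then controlled by the $p$-part bound $C\xi^{p(y)}$.

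Collecting the constants into $C_\gamma$ and enlarging the argument inside $\mathcal H(y,\cdot)$ (since $\xi\ge1$ and $q^+<\infty$, constants can be absorbed by scaling $\xi$) gives $\mathcal H(x,\xi)\le\mathcal H(y,C_\gamma\xi)$ without the multiplicative factor outside. The "in particular" statement follows immediately, because every $\xi\in I$ satisfies the hypothesis for every $x\in U$.

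The main obstacle is the case $a(x)>2a(y)$ in (ii). It is the only place where the compatibility condition between $\theta$, $\alpha$ and the exponents is used, and the required bookkeeping is delicate: one must check that $(q(x)-p(x))/p(x)\le\alpha/\theta$ exactly matches the Hölder decay of $a$, and choose $r_1$ so that the log-Hölder estimates and $d(x,y)<1$ hold simultaneously.
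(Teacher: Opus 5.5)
Your proof is correct. In (ii) it follows the paper's skeleton; in (i) it takes a more elementary route, and there are a few smaller differences worth noting.

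\textbf{Part (i).} The paper uses log-H\"older continuity to get $|p(x)-p(y)|\le L_p/\log(e+1/r_0)$. It then uses H\"older continuity of $a$ together with $a\ge a_0$ to get $a(x)\le(1+L_ar_0^\alpha/a_0)\,a(y)$. You use only $p^+-p^-$, $q^+-q^-$ and $a_1/a_0$. This shows that (i) holds for all $x,y\in E$ at every scale and needs none of the regularity hypotheses.

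\textbf{Part (ii).} Your skeleton is the paper's. You bound $\xi$ through the modular constraint and control the exponent oscillation by log-H\"older continuity. You then use H\"older continuity of $a$ together with $\theta(q/p-1)\le\alpha$ to dominate the excess of the $q$-phase at $x$ by $\xi^{p(x)}$. Finally you absorb the multiplicative constant via $\mathcal H(y,C\xi)\ge C^{p^-}\mathcal H(y,\xi)$.

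The paper avoids your dichotomy $a(x)\le 2a(y)$ versus $a(x)>2a(y)$. It writes $a(x)\le a(y)+L_ar^\alpha$ throughout, so that $a(x)\xi^{q(x)}\le C_2a(y)\xi^{q(y)}+L_ar^\alpha\xi^{q(x)}$, and the last term is treated exactly as in your second case. That is slightly shorter. Your split has the merit of guaranteeing $x\in X_0$ wherever the compatibility condition is invoked. The paper leaves this implicit, harmlessly, since the term vanishes when $a(x)=0$.

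\textbf{Which version is proved.} The paper silently replaces the threshold $\gamma r_0^{-\theta}$ by the larger threshold $\gamma r^{-\theta}$, with $r=2\operatorname{rad}(B)<r_0$. So it actually proves the scale-dependent version that you sketch in your parenthetical remarks. In that version you only need $d(x,y)<r$, not $r\approx d(x,y)$.

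This is the only version in which the hypotheses of (ii) matter. With the literal fixed $r_0$, any admissible $\xi$ satisfies $1\le\xi\le\max\{1,\gamma r_0^{-\theta}\}^{1/p^-}$. Hence $\mathcal H(x,\xi)$ is bounded by a constant depending on $\gamma$, $r_0$ and $\|a\|_\infty$, while $\mathcal H(y,\xi)\ge1$. The conclusion therefore follows without log-H\"older continuity, without H\"older continuity of $a$, and without the compatibility condition.

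\textbf{Minor points.}
\begin{enumerate}
\item Your opening ratio display is meaningless when $a(y)=0$. It is also not what you actually do when $a(x)>2a(y)$: there the $q$-phase at $x$ is dominated by the $p$-phase at $y$.
\item The absorption of constants into the argument rests on $p(y)\ge p^->0$, not on $\xi\ge1$ or $q^+<\infty$.
\item Your $p$-part display tacitly assumes $\gamma r_0^{-\theta}\ge1$. This is automatic whenever an admissible $\xi\ge1$ exists.
\end{enumerate}
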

\begin{proof}
\begin{enumerate}
\item[(i)] Let \(B\subset X\) be a ball with radius \(\operatorname{rad}(B) < \frac{r_0}{2}\) and let \(x,y\in B\cap E\). Since \(p\) is locally log-Hölder continuous,
\[|p(x)-p(y)| \le \frac{L_p}{\log\!\left(e+\frac1{r_0}\right)} =:E_p \]
Hence \(\xi^{p(x)} = \xi^{p(y)}\xi^{p(x)-p(y)}.\) Since \(\xi\in[t_0,T]\), \(\xi^{p(x)-p(y)} \le \max\{T^{E_p},\,t_0^{-E_p}\} =:C_1, \) which gives \(\xi^{p(x)} \le C_1\xi^{p(y)}\). Similarly,
\[|q(x)-q(y)| \le \frac{L_q}{\log\!\left(e+\frac1{r_0}\right)} =:E_q\]
Therefore,
\(\xi^{q(x)} = \xi^{q(y)}\xi^{q(x)-q(y)} \le \max\{T^{E_q},\,t_0^{-E_q}\}\,\xi^{q(y)} =:C_2\xi^{q(y)} \). Since \(a\in C^{0,\alpha}(E)\),
\( |a(x)-a(y)| \le L_a d(x,y)^\alpha \le L_a r_0^\alpha.\)
Using the lower bound \(a(y)\ge a_0\), we obtain \[a(x) \le a(y)+L_a r_0^\alpha \le \left(1+ \frac{L_a r_0^\alpha}{a_0} \right)a(y) =:C_3a(y).\]

Consequently, \(a(x)\xi^{q(x)} \le C_{2}C_{3}\,a(y)\xi^{q(y)}\). Combining the above estimates, we get
\[
\begin{aligned}
\mathcal{H}(x,\xi) &= \xi^{p(x)} + a(x)\xi^{q(x)} \\
&\le C_{1}\xi^{p(y)} + C_{2}C_{3}a(y)\xi^{q(y)} \\
&\le C\left( \xi^{p(y)} + a(y)\xi^{q(y)}\right) \\
&= C\,\mathcal{H}(y,\xi)
\end{aligned}
\]
where \(C=\max\{C_{1},C_{2}C_{3}\}\).

\item[(ii)] Fix \(\gamma>0\). Choose \(r_1\) such that \(0<r_1< \min \{1,r_0\}\). Let \(B\subset X\) be a ball with
\(\operatorname{rad}(B)<r_1/2\), \(x,y\in B\cap U\), and set \(r:=2\operatorname{rad}(B)<r_1\). Suppose \(\xi\ge1\) satisfy \(\mathcal H(x,\xi)\le\gamma r_0^{-\theta}\). Since \(\xi^{p(x)}\le\mathcal H(x,\xi)\le\gamma r^{-\theta}\), we obtain
\[
\log\xi\le \frac{|\log\gamma|}{p^-} +\frac{\theta}{p^-}\log\frac{1}{r} \le A_\gamma\log\frac{e}{r},\]
where
\[
A_\gamma:=\frac{|\log\gamma|+\theta}{p^-}.
\]
By the local log-Hölder continuity of \(p\),
\[
|p(x)-p(y)|
\le
\frac{L_p}{\log(e+1/r)}.
\]
Hence
\[
\begin{aligned}
\xi^{p(x)}
&=\xi^{p(y)}\xi^{p(x)-p(y)}\\
&\le
\xi^{p(y)}
\exp\left(
\frac{L_p A_\gamma\log(e/r)}
{\log(e+1/r)}
\right)\\ & 
\le C_1\xi^{p(y)},
\end{aligned}
\]
where \(C_1>0\) is independent of \(x,y,r,\xi\). Similarly, \(\xi^{q(x)}\le C_2\xi^{q(y)}\) for some \(C_2>0\). Moreover, by the Hölder continuity of \(a\),\(a(x)\le a(y)+L_a r^\alpha\). Therefore,
\[
a(x)\xi^{q(x)}
\le
C_2a(y)\xi^{q(y)}
+L_a r^\alpha\xi^{q(x)}.
\]
Since \(\xi^{p(x)}\le\gamma r^{-\theta}\), we have
\[
\xi^{q(x)-p(x)}
\le
B_\gamma
r^{-\theta\left(\frac{q(x)}{p(x)}-1\right)},
\]
for some \(B_{\gamma}>0\) depending only on \(\gamma\) and the structural bounds. Now, as \(\theta\left(\frac{q(x)}{p(x)}-1\right) \le \alpha\). we have,
\[
\begin{aligned}
r^\alpha\xi^{q(x)}
&=
\xi^{p(x)}r^\alpha\xi^{q(x)-p(x)}\\
&\le
B_\gamma\xi^{p(x)}
r^{\alpha-\theta\left(\frac{q(x)}{p(x)}-1\right)}.
\end{aligned}
\]
Since \(r<r_1\le1\),
\[
r^\alpha\xi^{q(x)}
\le B_\gamma\xi^{p(x)}
\le B_\gamma C_1\xi^{p(y)}.
\]
Thus,
\[
a(x)\xi^{q(x)}
\le
C_2a(y)\xi^{q(y)}
+B_\gamma C_1\xi^{p(y)}.
\]
Combining this with \(\xi^{p(x)}\le C_1\xi^{p(y)}\), we obtain \(\mathcal H(x,\xi) \le C_\gamma' \mathcal H(y,\xi)\), where \(C_\gamma'>0\) is independent of \(x,y,r,\xi\). Finally, choose \(C_\gamma\ge1\) sufficiently large so that \(C_\gamma'C_\gamma^{-p^-}\le1\) and \(C_\gamma'C_\gamma^{-q^-}\le1\). Since \(p(y)\ge p^-\) and \(q(y)\ge q^-\), \(C_\gamma'\mathcal H(y,\xi) \le \mathcal H(y,C_\gamma\xi)\). Hence \(\mathcal H(x,\xi) \le \mathcal H(y,C_\gamma\xi)\).
\end{enumerate}
\end{proof}

 Next we recall the definition the fractional maximal operator (see, for instance, \cite{HeikkinenLehrbackNuutinenTuominen2013}).
\begin{definition} \label{WFMO}
Let \(\Omega'\) be a domain in \(X\) and \(g\in L^1_{\mathrm{loc}}(\Omega')\). The fractional maximal operator
of order \(\theta\), \(0 \le \theta<1\), is defined by
\[\mathcal{M}_{\theta}g(z)= \sup_{0<r<2\operatorname{diam}(\partial\Omega')} r^{\theta}
\fint_{B(z,r)\cap\Omega}|g(x)|\,d\mu, ~~ z\in\partial\Omega' \]
\end{definition}
The following estimate for the fractional maximal operator is a key tool in establishing the existence and boundedness of traces on \(\partial\Omega\).
\begin{Lemma} \label{lem:strong-maximal-theta}
Suppose that \(\Omega\) is bounded and \(\mathcal H\) satisfies the local comparability condition on \(\overline{\Omega}\). Then for every \(g\in L^{\mathcal H}(\Omega)\), \[\int_{\partial\Omega}\mathcal H(z,\mathcal{M}_\theta g(z))\,d\mathscr H(z) \le C \int_{\Omega} \mathcal H(x,C|g(x)|)\,d\mu.\]
\end{Lemma}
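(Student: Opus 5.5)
The plan is to reduce the modular inequality to a scalar trace-type estimate for a fractional maximal function of an $L^1$ density on $\Omega$. First I would use the local comparability condition together with a Jensen-type inequality to replace $\mathcal H(z,\cdot)$, frozen at the boundary point, by $\mathcal H(x,\cdot)$ inside the averages. Then I would use a covering argument together with the upper codimension-$\theta$ bound \eqref{eq:upper-codimension} to control the boundary measure of the level sets. The integrability exponent $p^->1$ supplies the room needed to pass from a weak-type to a strong-type bound.

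\emph{Step 1 (pointwise reduction).} Set $\psi_z(s):=\mathcal H(z,s)^{1/p^-}$. Since $s\mapsto \mathcal H(z,s)/s^{p^-}$ is nondecreasing, $\psi_z(s)/s$ is nondecreasing, so $\psi_z$ is equivalent to a convex function. Hence Jensen's inequality holds up to a constant:
\[
\psi_z\Bigl(\fint_{B(z,r)\cap\Omega}|g|\,d\mu\Bigr)\le C\fint_{B(z,r)\cap\Omega}\psi_z(C|g|)\,d\mu .
\]
For $\operatorname{rad}<r_0$, the local comparability condition on $\overline\Omega$ (note $z\in\partial\Omega\subset\overline\Omega$) gives $\psi_z(C|g(x)|)\le C\,G(x)$ with $G(x):=\mathcal H(x,C|g(x)|)^{1/p^-}$. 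For the factor $r^\theta$, since $\Omega$ is bounded we have $r^\theta\le C$ and $\psi_z(\lambda s)\le \max\{\lambda,\lambda^{q^+/p^-}\}\psi_z(s)$, so $\psi_z(r^\theta s)\le C r^{\theta}\psi_z(s)$ for $r\le 1$, with constants otherwise. This yields, at least for the small radii,
\[
\mathcal H(z,\mathcal M_\theta g(z))\le C\,\bigl(\mathcal M_\theta G(z)\bigr)^{p^-}.
\]
For radii $r_0\le r<2\operatorname{diam}(\partial\Omega)$, there are only boundedly many scales, and the restricted doubling condition gives $\mu(B(z,r)\cap\Omega)\ge c\,\mu(\Omega)$. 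I would cover $\Omega$ by finitely many balls of radius $r_0/2$ and try to compare each average with averages over these small balls. \textbf{This large-radius regime is where I expect the main obstacle}: local comparability gives no link between $\mathcal H(z,\cdot)$ and $\mathcal H(x,\cdot)$ when $x$ is far from $z$. The first thing I would try is to chain through the finitely many overlapping small balls, accepting a constant $C$ depending on $\Omega$, $r_0$ and the doubling constant. Failing that, I would build the restriction $r<r_0$ into the argument and absorb the remaining part by boundedness of $\mathcal H(z,\cdot)$ on bounded sets.

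\emph{Step 2 (weak type $(1,1)$ to the boundary).} For $t>0$ let $E_t=\{z\in\partial\Omega:\mathcal M_\theta G(z)>t\}$. Each $z\in E_t$ has an $r_z$ with $r_z^\theta\int_{B(z,r_z)\cap\Omega}G\,d\mu>t\,\mu(B(z,r_z)\cap\Omega)$. The $5r$-covering lemma (valid since the radii are bounded) gives disjoint balls $B_i$ with $E_t\subset\bigcup 5B_i$. Using \eqref{eq:upper-codimension} and the restricted doubling condition,
\[
\mathscr H(E_t)\le C\sum_i\frac{\mu(B_i\cap\Omega)}{r_i^\theta}\le \frac{C}{t}\int_\Omega G\,d\mu .
\]
The same argument applied to $G\chi_{\{G>t/2\}}$ gives $\mathscr H(E_t)\le \frac Ct\int_{\{G>t/2\}}G\,d\mu$, since $\mathcal M_\theta$ of the truncated part is at most $Ct/2$ when $r^\theta$ is bounded, after adjusting constants.

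\emph{Step 3 (strong type via $p^->1$).} Using the layer-cake formula and Fubini,
\[
\int_{\partial\Omega}(\mathcal M_\theta G)^{p^-}d\mathscr H=p^-\int_0^\infty t^{p^--1}\mathscr H(E_t)\,dt\le C\int_\Omega G^{p^-}d\mu ,
\]
which is the Marcinkiewicz argument. Combining this with Step 1 gives
\[
\int_{\partial\Omega}\mathcal H(z,\mathcal M_\theta g)\,d\mathscr H\le C\int_\Omega\mathcal H(x,C|g|)\,d\mu .
\]
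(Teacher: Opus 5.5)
Steps 2 and 3 are fine, and Step 1 is correct for radii $r<r_0$. The proof is still incomplete, because the regime $r_0\le r<2\operatorname{diam}(\partial\Omega)$, which you flag yourself, is never closed.

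Your second fallback cannot work. The large-radius part of $\mathcal M_\theta g(z)$ is only bounded by $C\|g\|_{L^1(\Omega)}$, which depends on $g$, and the estimate admits no additive constant (take $g=\varepsilon\chi_\Omega$ with $\varepsilon\to0$). Comparing a large average with averages over small balls does not help either, since the real obstruction is comparing $\mathcal H(z,\cdot)$ with $\mathcal H(x,\cdot)$ for $x$ far from $z$.

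Your first fallback (chaining) is the right idea. Its correct output is a global comparability on $\overline\Omega$, and it rests on two facts you do not supply.
\begin{itemize}
\item \emph{Finite net.} Since $\Omega$ is bounded and $\mu$ is nontrivial and restricted doubling, $\mu(B(z,\rho)\cap\Omega)\ge c(\rho)\,\mu(\Omega)>0$ uniformly in $z\in\overline\Omega$. Hence a maximal $r_0/4$-separated subset of $\overline\Omega$ is finite, say $\{y_1,\dots,y_N\}$.
\item \emph{Bounded chains.} Since $\Omega$ is connected, any two of its points are joined by a chain in $\Omega$ with steps $<r_0/4$. Shortcutting at repeated balls $B(y_j,r_0/4)$ leaves at most $N+1$ points, with consecutive distances $<3r_0/4$. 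Every boundary point is within $r_0/2$ of $\Omega$.
\end{itemize}
Iterating $\mathcal H(x,\xi)\le C\mathcal H(y,C\xi)$ along such a chain gives $\mathcal H(z,\xi)\le K\,\mathcal H(x,K\xi)$ for all $z,x\in\overline\Omega$ and $\xi\ge0$, with $K$ depending only on $C$ and $N$. With this, your Step 1 runs for every $r<2\operatorname{diam}(\partial\Omega)$ at once, with no splitting of radii.

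The paper's proof passes over the same point. It applies local comparability on the covering balls $5B_k$ and $B_k$ without checking $5r_k<r_0$, so this upgrade is tacitly needed there as well.

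Apart from this, your route genuinely differs from the paper's.
\begin{itemize}
\item \emph{Your route.} You freeze $z$ and use that $\mathcal H(z,\cdot)^{1/p^-}$ is equivalent to a convex function (since $\mathcal H(z,s)/s^{p^-}$ is nondecreasing) to get a Jensen inequality. You invoke comparability once, between $z$ and points of $B(z,r)\cap\Omega$, to reach the pointwise bound $\mathcal H(z,\mathcal M_\theta g(z))\le C\,(\mathcal M_\theta G(z))^{p^-}$ with $G=\mathcal H(\cdot,C|g|)^{1/p^-}$. What remains is the fixed-exponent fact that $\mathcal M_\theta$ maps $L^{p^-}(\Omega,\mu)$ into $L^{p^-}(\partial\Omega,\mathscr H)$, obtained by weak type $(1,1)$ plus Marcinkiewicz.
\item \emph{The paper's route.} It never leaves the modular. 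It bounds $\int_{E_\lambda}\mathcal H(z,\lambda)\,d\mathscr H$ directly, moving $\mathcal H$ from boundary points to the centres $z_k$ and then to interior points inside each covering ball. It closes with a dyadic summation based on $\mathcal H(x,s\xi)\le Cs^{p^-}\mathcal H(x,\xi)$ for $0<s\le1$.
\end{itemize}
Your version isolates all the variable-growth information in one pointwise step and reduces the rest to a classical scalar estimate. The paper's version avoids the convexification and Jensen step, at the cost of invoking comparability inside the covering argument itself.
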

\begin{proof}
Since \(\mathcal H(z,0)=0\), we may restrict the argument to points \(z\in\partial\Omega\) with \(\mathcal M_\theta g(z)>0\). For \(\lambda>0\), define \(E_\lambda = \{z\in\partial\Omega: \mathcal{M}_\theta g(z)>\lambda\}\). Then for every \(z\in E_\lambda\), there exists \(r_{z}>0\) such that \[r_{z}^\theta \fint_{B(z,r_{z})\cap\Omega} |g|\,d\mu >\lambda. \] By the 5r-covering theorem \cite[Lemma 1.7]{BjornBjorn2011}, there exists a countable disjoint subcollection \(B_{k}=B(z_{k},r_{k}) ,k=1,2...,\) for some choice of \(z_{k} \in E_{\lambda}\) such that \(E_{\lambda}\subset \bigcup_{k}(5B_{k}\cap\partial\Omega)\). Therefore,
\[\begin{aligned}
\int_{E_\lambda}
\mathcal H(z,\lambda)\,d\mathscr H(z)
&\le \sum_k \int_{5B_k\cap\partial\Omega} \mathcal H(z,\lambda)\,d\mathscr H(z).
\end{aligned}\]
Now by Definition \ref{Local comparability property}, \(\mathcal H(z,\lambda) \le C\mathcal H(z_k,C\lambda),~~ z\in 5B_k\cap\partial\Omega \). Hence, \[\int_{E_\lambda} \mathcal H(z,\lambda)\,d\mathscr H(z) \le C
\sum_k \mathcal H(z_k,C\lambda) \mathscr H(5B_k\cap\partial\Omega).\]
Using the upper codimension-\(\theta\) bound \eqref{eq:upper-codimension} and the restricted doubling condition , we get
\[\mathscr H(5B_k\cap\partial\Omega)\le C\frac{\mu(B_k\cap\Omega)}{r_k^\theta}.\]
Consequently,
\begin{equation}\label{equation 4.1}
\int_{E_\lambda} \mathcal H(z,\lambda)\,d\mathscr H(z) \le C \sum_k
\mathcal H(z_k,C\lambda) \frac{\mu(B_k\cap\Omega)} {r_k^\theta}.
\end{equation}
Next we split \(|g| = |g|\chi_{\{|g|\le c\lambda\}} + |g|\chi_{\{|g|>c\lambda\}} \). For the first part \[r_k^\theta \fint_{B_{k}\cap\Omega} |g|\chi_{\{|g|\le c\lambda\}} \,d\mu \le cR_{0}^\theta\lambda, \]
where \(R_{0}=2\operatorname{diam}(\partial\Omega)\). Choosing \(c>0\) such that \(c<\min\left\{1,\frac{1}{2R_0^\theta}\right\}\), we obtain
\[\lambda \le C r_{k}^\theta \fint_{B_{k}\cap\Omega} |g|\chi_{\{|g|>c\lambda\}} \,d\mu. \]
Therefore,
\[\frac{\mu(B_{k}\cap\Omega)} {r_{k}^\theta} \le C \frac1\lambda \int_{B_k\cap\Omega}
|g(x)| \chi_{\{|g|>c\lambda\}}\,d\mu.\]
Substituting this estimate in equation \eqref{equation 4.1}, we get
\[
\begin{aligned}
\int_{E_\lambda}
\mathcal H(z,\lambda)\,d\mathscr H(z)
&\le C
\sum_k \mathcal H(z_k,C\lambda) \frac1\lambda \int_{B_k\cap\Omega}
|g(x)| \chi_{\{|g|>c\lambda\}}\,d\mu.
\end{aligned}\]
Again by Definition \ref{Local comparability property}, \(\mathcal H(z_k,C\lambda) \le C \mathcal H(x,C\lambda), ~~ x\in B_k\cap\Omega \). Since the balls are disjoint,

\begin{equation} \label{ModLevel}
\int_{E_\lambda} \mathcal H(z,\lambda)\,d\mathscr H(z) \le C \int_{\{|g|>c\lambda\}} \frac{\mathcal H(x,C\lambda)}{\lambda} |g(x)|\,d\mu.
\end{equation}
For \(N \in \mathbb{N}\), define \(M_N(z)=\min\{ M_\theta g(z), N\}\). Clearly \(M_N \uparrow M_\theta g(z)\) as \(N \to \infty\). Let \(A^N_{j} = \{z\in\partial\Omega: 2^j< M_N(z)\le2^{j+1}\}.\) Then the sets  \(\{A^N_j\}_{j\in\mathbb Z}\) are pairwise disjoint and \(\partial\Omega =\bigcup_{j\in\mathbb Z}A^N_j\). Thus,
\[\int_{\partial\Omega} \mathcal{H}(z,M_N(z))\,d\mathscr{H}(z) = \sum_{j\in\mathbb Z} \int_{A^N_j} \mathcal{H}(z,M_N(z))\,d\mathscr{H}(z).\]
Since \(M_N(z)\le 2^{j+1}\) for every \(z\in A^N_{j}\) and \(\mathcal{H}(z,\cdot)\) is increasing, it follows that \(\mathcal{H}(z,M_N(z)) \le \mathcal{H}(z,2^{j+1}) \le C \,\mathcal{H}(z,2^j)\). Therefore, 
\[\int_{A^N_j} \mathcal{H}(z,M_N(z))\,d\mathscr H(z) \le C \int_{A^N_j}
\mathcal{H}(z,2^j)\,d\mathscr{H}(z) \le C\int_{E_{2^j}}
\mathcal{H}(z,2^j)\,d\mathscr{H}(z).\]
Combining the last two estimates, we obtain
\[ \int_{\partial\Omega} \mathcal{H}(z,M_N(z))\,d\mathscr H(z) \le C  \sum_{j\in\mathbb Z} \int_{E_{2^j}}\mathcal{H}(z,2^j)\,d\mathscr{H}(z).\]
Using the estimate \eqref{ModLevel}, we have
\[
\begin{aligned}
\int_{\partial\Omega} \mathcal H(z, M_N(z)) \,d\mathscr H(z) &\le C \sum_{j\in\mathbb Z} \int_{\{|g|>c2^j\}} \frac{\mathcal H(x,C2^j)}{2^j} |g(x)|\,d\mu.
\end{aligned}\]
By Tonelli's theorem, we may interchange the summation and integration to obtain
\[
\begin{aligned} &\int_{\partial\Omega} \mathcal H(z, M_N(z)) \,d\mathscr{H}(z) &\le C \int_\Omega |g(x)| \sum_{2^j<c|g(x)|} \frac{\mathcal H(x,C2^j)}{2^j} \,d\mu.
\end{aligned}
\]
Since \(\mathcal H(x,s\xi) \le Cs^{p^-}\mathcal H(x,\xi),~~ 0<s\le 1 \). Putting \(\xi=C|g(x)|\) and \(s=\frac{2^j}{|g(x)|}\), we get
\[\frac{\mathcal H(x,C2^j)}{2^j} \le C \frac{\mathcal H(x,C|g(x)|)}{|g(x)|}\left(\frac{2^j}{|g(x)|}\right)^{p^--1}.\]
Hence,
\[\begin{aligned}
&|g(x)| \sum_{2^j<c|g(x)|} \frac{\mathcal H(x,C2^j)}{2^j}
&\le C \mathcal H(x,C|g(x)|) \sum_{2^j<c|g(x)|} \left(\frac{2^j}{|g(x)|}\right)^{p^--1}.
\end{aligned}\]
Since \(p^->1\), the last sum is a convergent geometric series. Integrating over \(\Omega\), we have
\[\int_{\partial\Omega}\mathcal H(z, M_N(z)) \,d\mathscr H(z) \le C \int_{\Omega} \mathcal H(x,C|g(x)|)\,d\mu.\]
Letting \(N \to \infty\) and applying the monotone convergence theorem, we obtain 
\[\int_{\partial\Omega}\mathcal H(z,\mathcal{M}_{\theta}g(z)) \,d\mathscr H(z) \le C \int_{\Omega} \mathcal H(x,C|g(x)|)\,d\mu.\]
\end{proof}
We now use the above estimate, together with the weak \((1,1)\)-Poincar\'e inequality, to construct a trace operator on \(N^{1,\mathcal H}(\Omega)\).
\begin{Theorem}\label{Bounded Trace Operator}
\label{BTO}
Assume that \(\Omega\) is bounded, \(\mathcal H\) satisfies the local comparability condition on \(\overline{\Omega}\), and \(\Omega\) supports the weak \((1,1)\)-Poincar\'e inequality. Then there exists a
bounded linear operator \(T:N^{1,\mathcal H}(\Omega)\rightarrow L^{\mathcal H}(\partial\Omega)\) such that, for every \(u\in N^{1,\mathcal H}(\Omega)\),
\[
\lim_{r\to0}
\fint_{B(z,r)\cap\Omega}|u(y)-Tu(z)|\,d\mu(y)=0
\]
for \(\mathscr H\)-almost every \(z\in\partial\Omega\).
\end{Theorem}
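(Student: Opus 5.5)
Following the Lebesgue-point/telescoping construction of Mal\'y \cite{Maly2019}, the trace is defined as the limit of ball averages at boundary points; Lemma~\ref{lem:strong-maximal-theta} supplies the modular control. For $u\in N^{1,\mathcal H}(\Omega)$ and $z\in\partial\Omega$, put $r_i=2^{-i}R_0$ with $R_0=2\operatorname{diam}(\partial\Omega)$ and $B_i(z)=B(z,r_i)\cap\Omega$. We set $Tu(z):=\lim_{i\to\infty}u_{B_i(z)}$ wherever the limit exists, and $Tu(z)=0$ otherwise. As in the proof of the global $\mathcal H$-Poincar\'e inequality above, restricted doubling together with the weak $(1,1)$-Poincar\'e inequality (in the form for balls centred at points of $\overline\Omega$) gives
\[
|u_{B_{i+1}(z)}-u_{B_i(z)}|\le C r_i\fint_{B(z,\lambda r_i)\cap\Omega}g_u\,d\mu
\le C r_i^{1-\theta}\Bigl(r_i^{\theta}\fint_{B(z,\lambda r_i)\cap\Omega}g_u\,d\mu\Bigr).
\]
Summing over $i\ge k$ yields
\[
\sum_{i\ge k}|u_{B_{i+1}(z)}-u_{B_i(z)}|\le C\,r_k^{1-\theta}\,\mathcal M_\theta g_u(z).
\]
Here we use $\theta<1$, so that $\sum_{i\ge k} r_i^{1-\theta}\simeq r_k^{1-\theta}$. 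The dilation $\lambda$ and the cutoff $2\operatorname{diam}(\partial\Omega)$ in Definition~\ref{WFMO} are absorbed by restricted doubling and the boundedness of $\Omega$.

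By Lemma~\ref{lem:strong-maximal-theta}, $\int_{\partial\Omega}\mathcal H(z,\mathcal M_\theta g_u)\,d\mathscr H\le C\varrho_{\mathcal H}(Cg_u)<\infty$. Since $\mathcal H(z,\xi)\ge \xi^{p^-}$ for $\xi\ge1$ and $\mathscr H(\partial\Omega)<\infty$ (take one large ball in \eqref{eq:upper-codimension}), it follows that $\mathcal M_\theta g_u<\infty$ $\mathscr H$-a.e. At every such $z$ the sequence $(u_{B_i(z)})$ is Cauchy, so $Tu(z)$ exists. Moreover, for $r_{k+1}\le r<r_k$,
\[
\fint_{B(z,r)\cap\Omega}|u-Tu(z)|\,d\mu\le C\fint_{B_k(z)}|u-u_{B_k(z)}|\,d\mu+|u_{B_k(z)}-Tu(z)|\le C r_k^{1-\theta}\mathcal M_\theta g_u(z)\to0.
\]
This gives the required Lebesgue-point property. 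Linearity of $T$ follows from linearity of averages on the common full-measure set where all the limits exist.

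For boundedness we use $|Tu(z)|\le |u_{B_0(z)}|+C R_0^{1-\theta}\mathcal M_\theta g_u(z)$ and $|u_{B_0(z)}|\le \mathcal M_\theta u(z)\,R_0^{-\theta}\cdot C$, since $B_0(z)\supset\Omega$ with comparable measure. Hence $|Tu|\le C(\mathcal M_\theta u+\mathcal M_\theta g_u)$. Applying Lemma~\ref{lem:strong-maximal-theta} to $u/\tau$ and $g_u/\tau$ with $\tau=\|u\|_{N^{1,\mathcal H}(\Omega)}$, and using the $\Delta_2$-type bound $\mathcal H(x,C\xi)\le C^{q^+}\mathcal H(x,\xi)$ with convexity, we obtain $\varrho_{\mathcal H,\partial\Omega}(Tu/\tau)\le C$. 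Lemma~\ref{ModEst} then gives $\|Tu\|_{L^{\mathcal H}(\partial\Omega)}\le C'\tau$; if the modular bound exceeds $1$, we rescale by $C^{1/p^-}$. Finally, $Tu$ depends only on the equivalence class of $u$, because $\mu$-null changes do not affect averages.

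The main obstacle is this last step: Lemma~\ref{lem:strong-maximal-theta} is a modular estimate with constants inside $\mathcal H$, and it must be converted into a norm bound. This works because the constants there are independent of $g$, so homogeneity through the Luxemburg norm and the growth bounds $p^-,q^+$ carry the argument. One must also check that $\mathcal H(z,\cdot)$ on $\partial\Omega$ is meaningful, i.e. that $p,q,a$ are defined on $\overline\Omega$, which the local comparability hypothesis on $\overline\Omega$ presupposes.
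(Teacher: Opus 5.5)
Your proposal is correct and follows the paper's proof essentially step for step: you bound telescoping ball averages at boundary points by $r_k^{1-\theta}\mathcal M_\theta g_u(z)$ via the weak $(1,1)$-Poincar\'e inequality, get $\mathscr H$-a.e.\ finiteness and the modular bound from Lemma~\ref{lem:strong-maximal-theta}, and rescale to pass from the modular to the Luxemburg norm. The one thing to adjust is the starting radius, because $\Omega\subset B(z,2\operatorname{diam}(\partial\Omega))$ can fail in a general metric space: as the paper does, start the dyadic sequence at some $r_0$ with $\lambda r_0<2\operatorname{diam}(\partial\Omega)$, so that every radius $\lambda r_i$ lies in the range allowed in the definition of $\mathcal M_\theta$, and bound the first average by $r_0^{-\theta}\mathcal M_\theta u(z)$ (your use of $\mathcal M_\theta u$ there is actually cleaner than the paper's $\mathcal M_0 u$, which Lemma~\ref{lem:strong-maximal-theta} does not directly cover when $\theta>0$).
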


\begin{proof}
Let \(u\in N^{1,\mathcal H}(\Omega)\). For \(z\in\partial\Omega\) and \(r>0\), define
\[T_ru(z)=\fint_{B(z,r)\cap\Omega}u\,d\mu. \]
For \(k=0,1,2,\ldots\), set \(r_k=2^{-k}r_0\), where \(r_0>0\) is to be chosen, and write \(B_k=B(z,r_k)\cap\Omega \). Since \(B_{k+1}\subset B_k\), we have
\[
\begin{aligned}
|T_{r_k}u(z)-T_{r_{k+1}}u(z)|
&=\left|\fint_{B_{k+1}}u\,d\mu-\fint_{B_k}u\,d\mu \right|\\
&\le\fint_{B_{k+1}}|u-u_{B_k}|\,d\mu. \end{aligned}\]
Applying the restricted doubling property and the weak \((1,1)\)-Poincaré inequality, we obtain
\[|T_{r_k}u(z)-T_{r_{k+1}}u(z)| \le C r_k\fint_{\lambda B_k}g_u\,d\mu .\]
Choose \(r_0\) such that \(\lambda r_0 < 2\operatorname{diam}(\partial\Omega)\). Then by Definition \ref{WFMO}, we have
\[\fint_{\lambda B_k}|g|\,d\mu \le (\lambda r_k)^{-\theta} \mathcal{M}_\theta g_u(z).\]
Consequently, 
\begin{equation} \label{Tdiff}
|T_{r_k}u(z)-T_{r_{k+1}}u(z)| \le C r_k^{1-\theta} \mathcal{M}_\theta g_u(z).
\end{equation}
Now, for \(m>k\),
\[
\begin{aligned}
|T_{r_k}u(z)-T_{r_m}u(z)| &\le \sum_{j=k}^{m-1} |T_{r_j}u(z)-T_{r_{j+1}}u(z)|\\ &\le C \mathcal{M}_\theta g_u(z) \sum_{j=k}^{m-1} r_j^{1-\theta} \\ & \le C r_k^{1-\theta}\mathcal{M}_\theta g_u(z).
\end{aligned}
\]
From Lemma \ref{lem:strong-maximal-theta}, we have \(\mathcal{M}_\theta g_u(z)<\infty \) for \(\mathscr H\)-almost every \(z\in\partial\Omega\). Since \(r_k^{1-\theta}\to 0\), \(|T_{r_k}u(z)-T_{r_m}u(z)| \rightarrow 0\) as \(k,m\to\infty\) for almost every \(z\in\partial\Omega\). Hence,\(\{T_{r_k}u(z)\}_{k=1}^{\infty}\)is a Cauchy sequence for \(\mathscr H\)-almost every
\(z\in\partial\Omega\). Define \(Tu(z)=\lim_{k\to\infty}T_{r_k}u(z) \), we have
\[
\begin{aligned} 
\fint_{B_k}|u-Tu(z)|\,d\mu &\le \fint_{B_k}|u-u_{B_k}|\,d\mu +
|T_{r_k}u(z)-Tu(z)|.
\end{aligned}
\]
Using similar calculations as above, we get
\[ \fint_{B_k}|u-u_{B_k}|\,d\mu \le C r_k^{1-\theta}\mathcal{M}_\theta g_u(z).\]
Hence,\[\lim_{k\to\infty} \fint_{B_k} |u(x)-Tu(z)|\,d\mu =0\]
for \(\mathscr H\)-almost every \(z\in\partial\Omega\). Now, let \(0<r<r_{0}\) and choose \(k\in\mathbb{N}\) such that \(r_{k+1}<r\le r_k \). Then \(r\le r_k<2r\)
and consequently
\(B(z,r)\cap\Omega \subset B_k.\)
Hence,
\[
\begin{aligned}
\fint_{B(z,r)\cap\Omega}|u(x)-Tu(z)|\,d\mu &= \frac{1}{\mu(B(z,r)\cap\Omega)}
\int_{B(z,r)\cap\Omega}|u(x)-Tu(z)|\,d\mu\\
&\le \frac{1}{\mu(B(z,r)\cap\Omega)}\int_{B_k}|u(x)-Tu(z)|\,d\mu\\
&= \frac{\mu(B_k)} {\mu(B(z,r)\cap\Omega)} \fint_{B_k}|u(x)-Tu(z)|\,d\mu.
\end{aligned}
\]
By the restricted doubling condition of \(\mu\) on \(\Omega\), \(\mu(B_k) \le \mu(B(z,2r)\cap\Omega) \le C\,\mu(B(z,r)\cap\Omega)\). Therefore,
\[\fint_{B(z,r)\cap\Omega}|u(x)-Tu(z)|\,d\mu \le C
\fint_{B_k}|u(x)-Tu(z)|\,d\mu.\]
Finally, as \(r\to 0\), necessarily \(k\to\infty\), and hence
\[\lim_{r\to0} \fint_{B(z,r)\cap\Omega} |u(x)-Tu(z)|\,d\mu =0\]
for \(\mathscr H\)-almost every \(z\in\partial\Omega\). We now show that \(T\) is bounded. Write
\[Tu(z) = \lim_{k\to\infty} T_{r_k}u(z)= T_{r_0}u(z) + \sum_{k=0}^{\infty} \left(T_{r_{k+1}}u(z)-T_{r_k}u(z)\right).\]
Using \eqref{Tdiff}, we obtain
\[|Tu(z)| \le \mathcal{M}_{0}u(z) +C\mathcal{M}_\theta g_{u}(z) \sum_{k=0}^{\infty} r_k^{1-\theta}.\] Since the series on the right side is convergent, we have
\begin{equation}\label{eq:pointwise-trace}
|Tu(z)| \le C\bigl( \mathcal{M}_{0}u(z) + \mathcal{M}_\theta g_{u}(z) \bigr).
\end{equation}
Since \(\mathcal{H}\) is subadditive and increasing, we have
\[
\int_{\partial\Omega}
\mathcal{H}\bigl(z,|Tu(z)|\bigr)\,d\mathscr{H}(z)
\le
C\int_{\partial\Omega}
\mathcal{H}\bigl(z,\mathcal{M}_{0}u(z)\bigr)\,d\mathscr{H}(z) + C \int_{\partial\Omega}
\mathcal{H}\bigl(z,\mathcal{M}_{\theta}g_u(z)\bigr)\,d\mathscr{H}(z).\]
Applying Lemma ~\ref{lem:strong-maximal-theta}, we get
\[\int_{\partial\Omega}
\mathcal{H}\bigl(z,|Tu(z)|\bigr)\,d\mathscr{H}(z)
\le C\int_{\Omega}
\mathcal{H}\bigl(x,|u(x)|\bigr)\,d\mu
+
C\int_{\Omega}
\mathcal{H}\bigl(x,g_u(x)\bigr)\,d\mu.\]
Assume that \(\|u\|_{N^{1,\mathcal{H}}(\Omega)}\le1\). Then \(\|u\|_{L^{\mathcal{H}}(\Omega)}\le1\) and \(\|g_u\|_{L^{\mathcal{H}}(\Omega)}\le1\).
By Lemma \ref{ModEst}, \(\varrho_{\mathcal{H}}(u)\le 1\) and \(\varrho_{\mathcal{H}}(g_u)\le 1\). Hence \[\int_{\partial\Omega} \mathcal{H}\bigl(z,|Tu(z)|\bigr)\,d\mathscr{H}(z)\le2C.\] Choose \(\Lambda >0\) such that
\[ \int_{\partial\Omega} \mathcal{H}\left(z,\frac{|Tu(z)|}{\Lambda}\right)\,d\mathscr{H}(z)\le1.\] By Lemma \ref{ModEst}, this yields \[\left\|  \frac{Tu}{\Lambda} \right\|_{L^{\mathcal{H}}(\partial\Omega)} \le1. \]
This completes the proof.
\end{proof}

\section{Variational Formulation and Minimization}

In this section, we introduce the variational formulation and study the minimization of the associated energy functional. Define
\[
\mathcal{A}(x,\xi)
=
\frac{\xi^{p(x)}}{p(x)}
+a(x)\frac{\xi^{q(x)}}{q(x)},
~~ \xi\ge0.
\]
For each \(x\in\Omega\), the function \(\xi\mapsto\mathcal{A}(x,\xi)\) is strictly convex and strictly increasing on \([0,\infty)\). Let \(f\in L^{\widetilde{\mathcal H}}(\partial\Omega)\) satisfy the  compatibility condition \[\int_{\partial\Omega}f\,d\mathscr H=0.\] 
We associate with \(f\) the energy functional
%\(\mathcal{J}_f:N^{1,\mathcal H}(\Omega)\to\mathbb{R}\) is given by
\[
\mathcal{J}_f(u)
=
\int_{\Omega}
\mathcal{A}(x,g_u)\,d\mu
+
\int_{\partial\Omega}f\,Tu\,d\mathscr H,~~u \in N^{1,\mathcal H}(\Omega).
\]

\begin{Proposition} The energy functional \(\mathcal{J}_f\) is convex on \(N^{1,\mathcal H}(\Omega)\).
\end{Proposition}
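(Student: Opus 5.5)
The plan is to split \(\mathcal J_f\) into its two parts, the energy term \(u\mapsto\int_\Omega\mathcal A(x,g_u)\,d\mu\) and the boundary term \(u\mapsto\int_{\partial\Omega}f\,Tu\,d\mathscr H\). I would show the first is convex and the second is linear; their sum is then convex.

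\emph{Boundary term.} By Theorem~\ref{BTO}, \(T\) is linear, so \(T(tu+(1-t)v)=tTu+(1-t)Tv\) \(\mathscr H\)-a.e. on \(\partial\Omega\). The generalized Hölder inequality, together with the boundedness of \(T\), gives \(\int_{\partial\Omega}|f\,Tu|\,d\mathscr H\le 2\|f\|_{L^{\widetilde{\mathcal H}}(\partial\Omega)}\|Tu\|_{L^{\mathcal H}(\partial\Omega)}<\infty\). Hence the boundary integral is finite and linear in \(u\), and in particular affine.

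\emph{Energy term.} Fix \(u,v\in N^{1,\mathcal H}(\Omega)\) and \(t\in[0,1]\), and set \(w=tu+(1-t)v\). For \(\mathcal H\)-a.e.\ curve \(\gamma\) the upper-gradient inequalities for \(u\) and \(v\) both hold, since a union of two exceptional families of zero \(\mathcal H\)-modulus again has zero modulus by subadditivity of the modulus. On such \(\gamma\) we get
\[
|w(\gamma(0))-w(\gamma(\ell_\gamma))|\le t\int_\gamma g_u\,ds+(1-t)\int_\gamma g_v\,ds .
\]
So \(tg_u+(1-t)g_v\in L^{\mathcal H}(\Omega)\) is an \(\mathcal H\)-weak upper gradient of \(w\). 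By minimality of \(g_w\),
\[
g_w\le tg_u+(1-t)g_v \quad \mu\text{-a.e.}
\]
Since \(\xi\mapsto\mathcal A(x,\xi)\) is increasing and convex for each \(x\), pointwise a.e.
\[
\mathcal A(x,g_w)\le\mathcal A\bigl(x,tg_u+(1-t)g_v\bigr)\le t\mathcal A(x,g_u)+(1-t)\mathcal A(x,g_v).
\]
Integrating over \(\Omega\) gives convexity of the energy term. The integrals are finite because \(\mathcal A\le\mathcal H\) and \(g_u,g_v\in L^{\mathcal H}(\Omega)\).

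\emph{Well-definedness on equivalence classes.} The only subtle point is that \(\mathcal J_f\) must be well defined on \(N^{1,\mathcal H}(\Omega)\). If \(u\sim\tilde u\), then \(\|u-\tilde u\|_{\tilde N^{1,\mathcal H}}=0\), so \(Tu=T\tilde u\) by boundedness of \(T\). Likewise \(g_u=g_{\tilde u}\) a.e., using the standard fact that functions agreeing outside a set of zero capacity share upper gradients. I expect this bookkeeping, rather than the convexity inequality itself, to be the main (mild) obstacle; I would cite the Newtonian-space references already used for \(g_u\).
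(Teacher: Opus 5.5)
Your proposal is correct and follows essentially the same route as the paper: show \(g_w\le tg_u+(1-t)g_v\) \(\mu\)-a.e., use that \(\mathcal A(x,\cdot)\) is increasing and convex to get the pointwise inequality, integrate, and add the boundary term, which is linear because \(T\) is linear. Your extra checks are points the paper leaves implicit, and you handle them correctly: finiteness of both terms, subadditivity of the modulus for the union of exceptional curve families, and well-definedness on equivalence classes.
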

\begin{proof}
Let \(u,v\in N^{1,\mathcal H}(\Omega)\) and let \(\lambda\in[0,1]\).
Set \(w:=\lambda u+(1-\lambda)v.\) Since \(g_u\) and \(g_v\) are \(\mathcal H\)-weak upper gradients of \(u\) and \(v\), respectively, the function
\(\lambda g_u+(1-\lambda)g_v\) is an \(\mathcal H\)-weak upper gradient of \(w\). Consequently, we have \(g_w\le \lambda g_u+(1-\lambda)g_v ~~\mu\text{-a.e. in }\Omega\). Since \(p(x)>1\), the function
\(t\mapsto\frac{t^{p(x)}}{p(x)}\) is convex on \([0,\infty)\). Therefore, 
\begin{equation}\label{eq 5.6}
\frac{g_w^{p(x)}}{p(x)} \le \frac{\big(\lambda g_u+(1-\lambda)g_v\big)^{p(x)}}{p(x)} 
\le \lambda\frac{g_u^{p(x)}}{p(x)} + (1-\lambda)\frac{g_v^{p(x)}}{p(x)}.
\end{equation}
Similarly, since \(q(x)>1\),
\begin{equation}\label{eq 5.7}
\frac{g_w^{q(x)}}{q(x)} \le \lambda\frac{g_u^{q(x)}}{q(x)} +
(1-\lambda)\frac{g_v^{q(x)}}{q(x)}.
\end{equation}
Multiplication by \(a(x)\) gives
\begin{equation}\label{eq 5.8}
a(x)\frac{g_w^{q(x)}}{q(x)}
\le
\lambda a(x)\frac{g_u^{q(x)}}{q(x)}
+
(1-\lambda)a(x)\frac{g_v^{q(x)}}{q(x)}.    
\end{equation}
Now, adding \eqref{eq 5.6}and \eqref{eq 5.8} and integrating over \(\Omega\), we get 
\begin{equation}\label{eq5.9}
\int_\Omega \mathcal{A}(x,g_w) d\mu \le \lambda \int_\Omega \mathcal{A}(x,g_u)d\mu +(1-\lambda) \int_\Omega \mathcal{A}(x,g_v)d\mu. \end{equation}
Moreover, by the linearity of the trace operator,
\(Tw=\lambda Tu+(1-\lambda)Tv.\)
Hence \begin{equation}\label{eq5.10}
\int_{\partial\Omega}fTw\,d\mathscr H = \lambda\int_{\partial\Omega} fTu\,d\mathscr{H} ~~+ (1-\lambda)\int_{\partial\Omega}fTv\,d\mathscr H.
\end{equation}
Combining \eqref{eq5.9} and \eqref{eq5.10}, we obtain
\[
\begin{aligned}
\mathcal{J}_f(\lambda u+(1-\lambda)v) &\le \lambda\mathcal{J}_f(u)
+(1-\lambda) \mathcal{J}_f(v).
\end{aligned}
\]for every \(u,v\in N^{1,\mathcal H}(\Omega)\) and every
\(\lambda\in[0,1]\). Thus \(\mathcal{J}_f\) is convex on
\(N^{1,\mathcal H}(\Omega)\).
\end{proof}

For the remainder of this section, we work with the following normalized space
\[
N^{1,\mathcal H}_*(\Omega)
:=
\left\{
u\in N^{1,\mathcal H}(\Omega):
\int_\Omega u\,d\mu=0
\right\}.
\]
It is easy to prove that \(N^{1,\mathcal H}_{*}(\Omega)\) is a closed linear subspace of \(N^{1,\mathcal H}(\Omega)\). Consequently, if \(N^{1,\mathcal H}(\Omega)\) is reflexive, then \(N^{1,\mathcal H}_{*}(\Omega)\) is also reflexive. This reflexivity will be used in the existence theorem. A function \(u_0\in N^{1,\mathcal H}_{*}(\Omega)\) is called a variational solution associated with \(\mathcal{J}_f\) if
\[
\mathcal{J}_f(u_0)\leq \mathcal{J}_f(v) ~~\text{for every }v\in N^{1,\mathcal H}_{*}(\Omega).
\]
From now on, we write \(I(f):=\inf_{u\in N^{1,\mathcal H}_*(\Omega)}\mathcal{J}_f(u)\). Below we establish the coercivity of the energy functional on \(N^{1,\mathcal H}_{*}(\Omega)\).
\begin{Proposition}\label{coercivity}
Assume that the conditions of Theorem \ref{BTO} are satisfied and that
\(\Omega\) satisfies the \(\mathcal H\)-Poincaré inequality. Then
\(\mathcal{J}_f\) is coercive on \(N^{1,\mathcal H}_{*}(\Omega)\).
\end{Proposition}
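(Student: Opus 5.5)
We have to show that \(\mathcal J_f(u)\to\infty\) as \(\|u\|_{N^{1,\mathcal H}(\Omega)}\to\infty\) with \(u\in N^{1,\mathcal H}_*(\Omega)\). The plan is to bound the gradient term from below by a power of \(\|g_u\|_{L^{\mathcal H}(\Omega)}\), bound the boundary term from above by a linear function of the same quantity, and then compare the two growth rates.

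\emph{Step 1: norm equivalence on the normalized space.} For \(u\in N^{1,\mathcal H}_*(\Omega)\) we have \(u_\Omega=0\). The global \(\mathcal H\)-Poincar\'e inequality then gives \(\|u\|_{L^{\mathcal H}(\Omega)}\le C\|g_u\|_{L^{\mathcal H}(\Omega)}\). Hence
\[
\|g_u\|_{L^{\mathcal H}(\Omega)}\le\|u\|_{N^{1,\mathcal H}(\Omega)}\le (1+C)\|g_u\|_{L^{\mathcal H}(\Omega)} .
\]
So \(\|u\|_{N^{1,\mathcal H}(\Omega)}\to\infty\) if and only if \(\|g_u\|_{L^{\mathcal H}(\Omega)}\to\infty\).

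\emph{Step 2: lower bound for the gradient term.} Since \(p(x),q(x)\le q^+\), we have \(\mathcal A(x,\xi)\ge \mathcal H(x,\xi)/q^+\). When \(\|g_u\|_{L^{\mathcal H}(\Omega)}>1\), Lemma \ref{ModEst}(iv) yields
\[
\int_\Omega\mathcal A(x,g_u)\,d\mu\ \ge\ \frac{1}{q^+}\,\varrho_{\mathcal H}(g_u)\ \ge\ \frac{1}{q^+}\,\|g_u\|_{L^{\mathcal H}(\Omega)}^{p^-}.
\]

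\emph{Step 3: upper bound for the boundary term.} Apply the generalized H\"older inequality on \(\partial\Omega\), then the boundedness of \(T\) from Theorem \ref{BTO}, then Step 1:
\[
\Bigl|\int_{\partial\Omega}f\,Tu\,d\mathscr H\Bigr|\le 2\|f\|_{L^{\widetilde{\mathcal H}}(\partial\Omega)}\|Tu\|_{L^{\mathcal H}(\partial\Omega)}\le C\|f\|_{L^{\widetilde{\mathcal H}}(\partial\Omega)}\|g_u\|_{L^{\mathcal H}(\Omega)} .
\]
The compatibility condition \(\int_{\partial\Omega}f\,d\mathscr H=0\) is not needed here. Its role is to make \(\mathcal J_f\) invariant under adding constants, and that invariance is already handled by working on the normalized space.

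\emph{Step 4: combine the bounds.} For \(\|g_u\|_{L^{\mathcal H}(\Omega)}>1\), Steps 2 and 3 give
\[
\mathcal J_f(u)\ge \frac{1}{q^+}\|g_u\|_{L^{\mathcal H}(\Omega)}^{p^-}-C\|f\|_{L^{\widetilde{\mathcal H}}(\partial\Omega)}\|g_u\|_{L^{\mathcal H}(\Omega)} .
\]
Since \(p^->1\), the right-hand side tends to \(+\infty\). By Step 1 this gives coercivity in the \(N^{1,\mathcal H}\)-norm. The same estimate also shows that \(\mathcal J_f\) is bounded below, so \(I(f)>-\infty\).

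\emph{Main obstacle.} The step that needs care is Step 3, specifically the chain of norms used there. The Poincar\'e inequality must be used together with the full trace bound \(\|Tu\|\le C(\|u\|+\|g_u\|)\) to express everything through \(\|g_u\|\) alone. In addition, \(L^{\widetilde{\mathcal H}}(\partial\Omega)\) must be the space dual to \(L^{\mathcal H}(\partial\Omega)\) with respect to the measure \(\mathscr H\), so that the H\"older inequality applies on the boundary. Once the bound is linear in \(\|g_u\|\), the strict inequality \(p^->1\) settles the comparison in Step 4.
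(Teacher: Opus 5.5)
Your proposal is correct and follows essentially the same route as the paper. The paper also uses the Poincar\'e inequality to reduce everything to $\|g_u\|_{L^{\mathcal H}(\Omega)}$, bounds the gradient term via $\mathcal A\ge\mathcal H/q^+$ and Lemma~\ref{ModEst}(iv), and bounds the boundary term via H\"older plus the trace bound. It closes with $p^->1$, phrased there through Young's inequality. Your Step~1 is slightly more careful than the paper, which writes $\|u_n\|_{N^{1,\mathcal H}(\Omega)}\le\|g_{u_n}\|_{L^{\mathcal H}(\Omega)}$ and drops the constant $1+C$.
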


\begin{proof}
Let \(\{u_n\}\subset N^{1,\mathcal H}_{*}(\Omega)\) be a sequence such that \(\|u_n\|_{N^{1,\mathcal H}(\Omega)}\rightarrow\infty\). Since \(\Omega\) satisfies the \(\mathcal H\)-Poincaré inequality, we have
\(\|u_n\|_{N^{1,\mathcal H}(\Omega)} = \|u_n\|_{L^{\mathcal H}(\Omega)} + \|g_{u_n}\|_{L^{\mathcal H}(\Omega)} \le  \|g_{u_n}\|_{L^{\mathcal H}(\Omega)}\). Therefore, \(\|g_{u_n}\|_{L^{\mathcal H}(\Omega)} \rightarrow\infty\). Now, \(\mathcal{A}(x,\xi) \ge \frac{1}{q^+}\mathcal H(x,\xi)\) for every \(\xi\ge0\). Consequently,
\[
\int_\Omega\mathcal A(x,g_{u_n})\,d\mu
\ge
\frac{1}{q^+}
\int_\Omega\mathcal H(x,g_{u_n})\,d\mu.
\]
Since \(\|g_{u_n}\|_{L^{\mathcal H}(\Omega)} > 1\) for all sufficiently
large \(n\), Lemma \ref{ModEst} gives
\[
\int_\Omega\mathcal H(x,g_{u_n})\,d\mu
\ge
\|g_{u_n}\|_{L^{\mathcal H}(\Omega)}^{p^-}.
\]
Thus,
\[
\int_\Omega\mathcal A(x,g_{u_n})\,d\mu
\ge
\frac{1}{q^+}
\|g_{u_n}\|_{L^{\mathcal H}(\Omega)}^{p^-}.
\]
On the other hand, we have
\[
\begin{aligned}
\left|
\int_{\partial\Omega}fTu_n\,d\mathscr H
\right|
&\le
C\|f\|_{L^{\widetilde{\mathcal H}}(\partial\Omega)}
\|Tu_n\|_{L^{\mathcal H}(\partial\Omega)}\\
&\le
C\|u_n\|_{N^{1,\mathcal H}(\Omega)}\\
&\le
C\|g_{u_n}\|_{L^{\mathcal H}(\Omega)},
\end{aligned}
\]
where the first inequality is due to the generalized Hölder inequality and second is due to Theorem \ref{BTO}. Therefore, for all sufficiently large \(n\),
\[
\mathcal{J}_f(u_n)
\ge
\frac{1}{q^+}
\|g_{u_n}\|_{L^{\mathcal H}(\Omega)}^{p^-}
-
C\|g_{u_n}\|_{L^{\mathcal H}(\Omega)}.
\]
Since \(p^->1\), Young's inequality implies that, for every
\(\varepsilon>0\), there exists \(C_\varepsilon>0\) such that
\[
C\|g_{u_n}\|_{L^{\mathcal H}(\Omega)}
\le
\varepsilon
\|g_{u_n}\|_{L^{\mathcal H}(\Omega)}^{p^-}
+C_\varepsilon.
\]
Choosing \(\varepsilon=1/(2q^+)\), we obtain
\[
\mathcal{J}_f(u_n)
\ge
\frac{1}{2q^+}
\|g_{u_n}\|_{L^{\mathcal H}(\Omega)}^{p^-}
-C',
\]
where \(C'>0\) is independent of \(n\). Since \(\|g_{u_n}\|_{L^{\mathcal H}(\Omega)}\rightarrow\infty\), it follows that \(\mathcal{J}_f(u_n)\rightarrow\infty\). Hence \(\mathcal{J}_f\) is coercive on \(N^{1,\mathcal H}_{*}(\Omega)\).
\end{proof}

We are ready to establish the existence of a minimizer for the energy functional \(\mathcal{J}_f\) over the normalized space \(N^{1,\mathcal H}_*(\Omega)\).

\begin{Theorem} \label{thm:Existence} Assume that the conditions of Proposition \ref{coercivity} are satisfied and \(N^{1,\mathcal H}(\Omega)\) is reflexive. Then there exists \(u_0\in N^{1,\mathcal H}_*(\Omega)\) such that
\(\mathcal{J}_f(u_0) = \inf_{u\in N^{1,\mathcal H}_*(\Omega)}\mathcal{J}_f(u).\)
\end{Theorem}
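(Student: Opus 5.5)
We argue by the direct method of the calculus of variations on the reflexive space \(N^{1,\mathcal H}_*(\Omega)\). First, \(I(f)\) is finite. It is at most \(\mathcal J_f(0)=0\). By the lower bound obtained in the proof of Proposition~\ref{coercivity}, together with boundedness of the boundary term on bounded sets (via the generalized Hölder inequality and Theorem~\ref{BTO}), \(\mathcal J_f\) is bounded below, so \(I(f)>-\infty\). We then take a minimizing sequence \(\{u_n\}\subset N^{1,\mathcal H}_*(\Omega)\) with \(\mathcal J_f(u_n)\to I(f)\). By coercivity (Proposition~\ref{coercivity}), \(\{u_n\}\) is bounded in \(N^{1,\mathcal H}(\Omega)\). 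Since \(N^{1,\mathcal H}_*(\Omega)\) is a closed subspace of a reflexive space, it is reflexive, and a subsequence converges weakly to some \(u_0\in N^{1,\mathcal H}_*(\Omega)\).

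It remains to show weak lower semicontinuity of \(\mathcal J_f\). The boundary term \(u\mapsto\int_{\partial\Omega}f\,Tu\,d\mathscr H\) is a bounded linear functional on \(N^{1,\mathcal H}(\Omega)\), since by Hölder and Theorem~\ref{BTO} it is dominated by \(2\|f\|_{L^{\widetilde{\mathcal H}}(\partial\Omega)}\|T\|\,\|u\|_{N^{1,\mathcal H}(\Omega)}\). Hence it is weakly continuous, and \(\int f\,Tu_n\to\int f\,Tu_0\).

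For the energy part \(\Phi(u):=\int_\Omega\mathcal A(x,g_u)\,d\mu\), the argument in the convexity proposition already shows that \(\Phi\) is convex. We prove that \(\Phi\) is strongly lower semicontinuous and then invoke Mazur's lemma: a convex, strongly l.s.c. functional is weakly l.s.c. Concretely, Mazur's lemma gives convex combinations \(v_n\in\operatorname{conv}\{u_k:k\ge n\}\) with \(v_n\to u_0\) in norm. Convexity gives \(\Phi(v_n)\le\sup_{k\ge n}\Phi(u_k)\). After passing to a subsequence along which \(\Phi(u_n)\) converges to \(\liminf\Phi(u_n)\), this yields \(\limsup\Phi(v_n)\le\liminf\Phi(u_n)\).

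The main obstacle is strong lower semicontinuity of \(\Phi\). For \(v_n\to u_0\) in \(N^{1,\mathcal H}(\Omega)\) we have \(\|g_{v_n-u_0}\|_{L^{\mathcal H}}\to0\). Since \(g_{u_0}\le g_{v_n}+g_{v_n-u_0}\) a.e. (the sum of weak upper gradients is a weak upper gradient of \(u_0\)), we get \(g_{u_0}\le g_{v_n}+h_n\) with \(h_n\to0\) in \(L^{\mathcal H}(\Omega)\). By Lemma~\ref{ModEst}(v), \(\varrho_{\mathcal H}(h_n)\to0\), so a further subsequence has \(h_n\to0\) a.e. Then \(g_{u_0}\le\liminf(g_{v_n}+h_n)=\liminf g_{v_n}\) a.e. Since \(\mathcal A(x,\cdot)\) is continuous and increasing, Fatou's lemma gives \(\Phi(u_0)\le\liminf\Phi(v_n)\).

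Combining the two parts, \(\mathcal J_f(u_0)\le\liminf\mathcal J_f(u_n)=I(f)\). Since \(u_0\in N^{1,\mathcal H}_*(\Omega)\), also \(\mathcal J_f(u_0)\ge I(f)\), so equality holds and \(u_0\) is a minimizer.
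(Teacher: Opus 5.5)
Your proof is correct and shares the paper's direct-method skeleton: coercivity gives boundedness, reflexivity of $N^{1,\mathcal H}_*(\Omega)$ gives a weak limit, the boundary functional $u\mapsto\int_{\partial\Omega}f\,Tu\,d\mathscr H$ is weakly continuous, and Mazur plus Fatou finish. The lower-semicontinuity step for $\Phi(u)=\int_\Omega\mathcal A(x,g_u)\,d\mu$, however, is done differently.

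The paper extracts a second weak limit $g_k\rightharpoonup g$ in $L^{\mathcal H}(\Omega)$ and applies Mazur's lemma to the pairs $(u_k,g_k)$ in $L^{\mathcal H}(\Omega)\times L^{\mathcal H}(\Omega)$. It then needs a Fuglede-type closure lemma (Lemma 4.6 of Maeda--Ohno--Shimomura) to know that $g$ is an $\mathcal H$-weak upper gradient of $u_0$. Only then does it get $g_{u_0}\le g$ and apply Fatou to $\mathcal A(x,\widetilde g_k)$.

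You apply Mazur's lemma directly in $N^{1,\mathcal H}(\Omega)$, so your convex combinations converge to $u_0$ in the Newtonian norm. Then $\|g_{v_n-u_0}\|_{L^{\mathcal H}(\Omega)}\to0$ together with $g_{u_0}\le g_{v_n}+g_{v_n-u_0}$ gives $g_{u_0}\le\liminf g_{v_n}$ a.e. along a subsequence. This needs no closure lemma and no auxiliary limit gradient.

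The theorem assumes reflexivity of $N^{1,\mathcal H}(\Omega)$, and the paper already uses weak convergence there to pass to the limit in the boundary term. Given that, your route is the more economical one. The paper's route is the standard Newtonian-space template, and its energy part needs only weak compactness of $(u_k,g_k)$ in $L^{\mathcal H}(\Omega)$.

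Your argument for $I(f)>-\infty$ is also slightly more careful than the paper's appeal to coercivity alone: you use nonnegativity of the energy together with boundedness of the boundary term on bounded sets.

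Two bookkeeping points. First, take the subsequence realizing $\liminf\Phi(u_n)$ before invoking Mazur. Second, in the Fatou step you only need $\Phi(u_0)\le\liminf_j\Phi(v_{n_j})\le\limsup_n\Phi(v_n)$, so the extra subsequence for a.e. convergence of $h_n$ is harmless.
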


\begin{proof}
By Proposition \ref{coercivity}, \(\mathcal{J}_f\) is bounded from below on \(N^{1,\mathcal H}_*(\Omega)\). Hence \(-\infty<I(f)\le \mathcal{J}_f(0)=0\). Choose a sequence \(\{u_k\}_{k=1}^{\infty}\subset N^{1,\mathcal H}_*(\Omega)\) such that \(\mathcal{J}_f(u_k)\rightarrow I(f)\) as \(k\to\infty\). The coercivity of \(\mathcal{J}_f\) implies that \(\{u_k\}\) is bounded in \(N^{1,\mathcal H}(\Omega)\). In particular, \(\{u_k\}\) is bounded in \(N^{1,\mathcal H}_*(\Omega)\). Since \(N^{1,\mathcal H}_*(\Omega)\) is reflexive, there exist, after passing to a subsequence, \(u_{0}\in N^{1,\mathcal H}_*(\Omega)\) such that
\begin{equation}\label{eq5.11}
u_k\rightharpoonup u_{0}
~~\text{in }N^{1,\mathcal H}_*(\Omega).
\end{equation}
Since the embedding \(N^{1,\mathcal H}(\Omega)\hookrightarrow L^{\mathcal H}(\Omega)\) is continuous, \eqref{eq5.11} also implies \(u_k\rightharpoonup u_{0} ~~\text{in }L^{\mathcal H}(\Omega)\). For each \(k\in \mathbb{N}\), let \(g_k\) be the minimal \(\mathcal H\)-weak upper gradient of \(u_k\). Since \(L^{\mathcal H}(\Omega)\) is reflexive and \(\{g_k\}_{k=1}^{\infty}\) is bounded in \(L^{\mathcal H}(\Omega)\), after passing to a further subsequence, there exists \(g\in L^{\mathcal H}(\Omega)\) such that \(u_k\rightharpoonup u_{0} ~~\text{in }L^{\mathcal H}(\Omega)\)
and \(g_k\rightharpoonup g ~~\text{in }L^{\mathcal H}(\Omega)\). Thus, \((u_k,g_k)\rightharpoonup (u_{0},g) ~~\text{in } L^{\mathcal H}(\Omega)\times L^{\mathcal H}(\Omega)\). By Mazur's lemma, there exist integers \(N(k)\ge k\) and coefficients \(\alpha_{k,i}\ge0\), \(i=k,\ldots,N(k)\), satisfying \( \displaystyle \sum_{i=k}^{N(k)}\alpha_{k,i}=1\), such that 
\[
\widetilde u_k
=
\sum_{i=k}^{N(k)}\alpha_{k,i}u_i,
~~
\widetilde g_k
=
\sum_{i=k}^{N(k)}\alpha_{k,i}g_i
\]
satisfy \(\widetilde u_k\rightarrow u ~~\text{in }L^{\mathcal H}(\Omega)\) and \(\widetilde g_k\rightarrow g
~~\text{in }L^{\mathcal H}(\Omega)\). From the convexity of the class of \(\mathcal H\)-weak upper gradients, it follows that \(\widetilde g_k\) is an \(\mathcal H\)-weak upper gradient of \(\widetilde u_k\). Therefore, by \cite[Lemma 4.6]{MaedaOhnoShimomura2019}), \(g\) is an \(\mathcal H\)-weak upper gradient of \(u_{0}\). Hence, \(\mathcal{A}(x,g_{u_{0}})\le\mathcal{A}(x,g) ~~\text{\(\mu\)-a.e. in }\Omega.\)
Consequently,
\begin{equation} \label{5.17}
\int_\Omega \mathcal{A}(x,g_{u_{0}})\,d\mu \le \int_\Omega\mathcal{A}(x,g)\,d\mu.
\end{equation}
After passing to a subsequence if necessary, we may assume that
\(\widetilde g_k(x)\rightarrow g(x) ~~\text{for \(\mu\)-a.e. }x\in\Omega.
\) Since \(\mathcal{A} \ge0\), Fatou's lemma yields
\[\int_\Omega\mathcal{A}(x,g)\,d\mu \le \liminf_{k\to\infty}
\int_\Omega \mathcal{A}(x,\widetilde g_k)\,d\mu.\]
Therefore, by \eqref{5.17},
\begin{equation} \label{5.18}
\int_\Omega\mathcal{A}(x,g_{u_{0}})\,d\mu \le \liminf_{k\to\infty} \int_\Omega\mathcal{A}(x,\widetilde g_{k})\,d\mu.
\end{equation}
Next, since \(\mathcal{A}(x,\cdot)\) is convex,
\begin{equation} \label{5.19}
\int_\Omega\mathcal{A}(x,\widetilde g_k)\,d\mu = \int_\Omega \mathcal{A} \left(x,\sum_{i=k}^{N(k)} \alpha_{k,i}g_i \right)d\mu 
\le \sum_{i=k}^{N(k)} \alpha_{k,i} \int_\Omega\mathcal{A}(x,g_i)\,d\mu.
\end{equation}
Now, let \(B(v):=\int_{\partial\Omega}fTv\,d\mathscr{H}\). Since \(T:N^{1,\mathcal H}(\Omega)\rightarrow L^{\mathcal H}(\partial\Omega) \) is bounded and linear and \(f\in L^{\widetilde{\mathcal H}}(\partial\Omega),\)
the generalized Hölder inequality imply that \(B\) is a bounded linear functional on
\(N^{1,\mathcal H}(\Omega)\). Since \(\widetilde u_k\) is a convex combination of a tail of the sequence \(\{u_k\}_{k=1}^{\infty}\), by \eqref{eq5.11}, we also have \(\widetilde u_k\rightharpoonup u_{0} ~~\text{in }N^{1,\mathcal H}_*(\Omega).\) Consequently, \(B(\widetilde u_k)\rightarrow B(u_{0})\). Now, by \eqref{5.19},
\begin{align} \label{5.20}
\int_\Omega \mathcal{A}(x,\widetilde g_k)\,d\mu
+B(\widetilde u_k)
&\le \sum_{i=k}^{N(k)}
\alpha_{k,i}
\int_\Omega\mathcal{A}(x,g_i)\,d\mu
+
\sum_{i=k}^{N(k)}\alpha_{k,i}B(u_i) \notag\\
&=
\sum_{i=k}^{N(k)}
\alpha_{k,i}\mathcal{J}_f(u_i).
\end{align}
Therefore, by \eqref{5.18} and \eqref{5.20},
\begin{align}
\mathcal{J}_f(u_{0}) &= \int_\Omega\mathcal{A}(x,g_u)\,d\mu+B(u) \notag\\
&\le \liminf_{k\to\infty} \left[ \int_\Omega\mathcal{A}(x,\widetilde g_k)\,d\mu +B(\widetilde u_k) \right] \notag\\
&\le \liminf_{k\to\infty} \sum_{i=k}^{N(k)} \alpha_{k,i}\mathcal{J}_f(u_i) = I(f).\notag
\end{align}
Since \(u_0\in N^{1,\mathcal H}_*(\Omega)\), we have \(\mathcal{J}_f(u_{0})=I(f).\) Hence, \(u_0\) is a minimizer of \(\mathcal{J}_f\) on
\(N^{1,\mathcal H}_*(\Omega)\).
\end{proof}
\begin{Remark} \label{Remark:WLSC}
The argument above also shows that \(\mathcal J_f\) is weakly lower
semicontinuous on \(N^{1,\mathcal H}_*(\Omega)\). Indeed, let \(u_k\rightharpoonup u_0
~~\text{in }N^{1,\mathcal H}_*(\Omega)\), and set \(L:=\liminf_{k\to\infty}\mathcal J_f(u_k)\). If \(L=+\infty\), there is nothing to prove. Otherwise, by passing to a subsequence, still denoted by \((u_k)\), we may assume that
\(\mathcal J_f(u_k)\rightarrow L\). From the proof of Theorem \ref{thm:Existence}, we have
\[
\mathcal J_f(u_0)
\le
\liminf_{k\to\infty}
\sum_{i=k}^{N(k)}
\alpha_{k,i}\mathcal J_f(u_i).
\]
Since \(\mathcal J_f(u_i)\to L\), every convex combination of a tail
of the sequence also converges to \(L\). Therefore, \(\mathcal J_f(u_0) \le L =\displaystyle \liminf_{k\to\infty}\mathcal J_f(u_k)\). 
\end{Remark}
The following lemma shows that the set of minimizers of \(\mathcal{J}_f\) is convex and norm-closed.
\begin{Theorem}
\label{CCM}
Let \(\mathcal{M} := \left\{ u\in N^{1,\mathcal H}_*(\Omega):\mathcal{J}_f(u)=I(f)
\right\}\). Then \(\mathcal{M}\) is convex and norm-closed.
\end{Theorem}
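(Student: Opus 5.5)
Convexity follows directly from the convexity of \(\mathcal J_f\) established in the preceding Proposition. If \(\mathcal M\) is empty, there is nothing to prove. Otherwise, take \(u,v\in\mathcal M\) and \(\lambda\in[0,1]\), and set \(w=\lambda u+(1-\lambda)v\).

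First, \(w\in N^{1,\mathcal H}_*(\Omega)\), since \(N^{1,\mathcal H}_*(\Omega)\) is a linear subspace and \(\int_\Omega w\,d\mu=0\).

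Next, convexity gives \(\mathcal J_f(w)\le\lambda\mathcal J_f(u)+(1-\lambda)\mathcal J_f(v)=I(f)\). By the definition of \(I(f)\) as an infimum over \(N^{1,\mathcal H}_*(\Omega)\), we also have \(\mathcal J_f(w)\ge I(f)\). Hence \(\mathcal J_f(w)=I(f)\), so \(w\in\mathcal M\).

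For norm-closedness, take \(u_k\in\mathcal M\) with \(u_k\to u\) in \(N^{1,\mathcal H}(\Omega)\).

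Since \(N^{1,\mathcal H}_*(\Omega)\) is closed, \(u\in N^{1,\mathcal H}_*(\Omega)\). Alternatively, \(u_k\to u\) in \(L^{\mathcal H}(\Omega)\), and since \(\Omega\) is bounded with \(\mu(\Omega)<\infty\), the generalized Hölder inequality with \(1\in L^{\widetilde{\mathcal H}}(\Omega)\) shows that \(\int_\Omega u_k\,d\mu\to\int_\Omega u\,d\mu\).

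It then suffices to show \(\mathcal J_f(u)\le\liminf_k\mathcal J_f(u_k)=I(f)\), because the reverse inequality is automatic. I would argue this by direct continuity estimates, avoiding any reflexivity assumption.

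\emph{Boundary term.} The map \(B(v)=\int_{\partial\Omega}fTv\,d\mathscr H\) is a bounded linear functional by Theorem~\ref{BTO} and the generalized Hölder inequality. Therefore \(B(u_k)\to B(u)\).

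\emph{Energy term.} Since \(g_{u_k}\) is an \(\mathcal H\)-weak upper gradient of \(u_k=u+(u_k-u)\), subadditivity of minimal weak upper gradients gives
\[
|g_{u_k}-g_u|\le g_{u_k-u}\quad\mu\text{-a.e.}
\]
Hence \(\|g_{u_k}-g_u\|_{L^{\mathcal H}(\Omega)}\le\|u_k-u\|_{N^{1,\mathcal H}(\Omega)}\to0\), so \(g_{u_k}\to g_u\) in \(L^{\mathcal H}(\Omega)\).

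Passing to a subsequence along which \(\mathcal J_f(u_k)\) realizes the \(\liminf\), and then to a further subsequence, we may assume \(g_{u_k}\to g_u\) \(\mu\)-a.e. Since \(\mathcal A\ge0\), Fatou's lemma gives
\[
\int_\Omega\mathcal A(x,g_u)\,d\mu\le\liminf_k\int_\Omega\mathcal A(x,g_{u_k})\,d\mu.
\]
Adding the convergent boundary terms yields \(\mathcal J_f(u)\le\liminf_k\mathcal J_f(u_k)=I(f)\), which finishes the proof.

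If needed, one can upgrade this to full continuity. Modular convergence \(\varrho_{\mathcal H}(g_{u_k}-g_u)\to0\) follows from Lemma~\ref{ModEst}(v). Combined with the \(\Delta_2\)-type bound \(\mathcal A\le\mathcal H\le q^+\mathcal A\), this lets one conclude by a dominated/Vitali-type argument, although only the lower semicontinuity above is required.

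\textbf{Main obstacle.} The delicate step is the pointwise inequality \(|g_{u_k}-g_u|\le g_{u_k-u}\), which is what lets norm convergence in \(N^{1,\mathcal H}\) control the minimal weak upper gradients. I would justify it by noting that \(g_u+g_{u_k-u}\) is an \(\mathcal H\)-weak upper gradient of \(u_k\), and symmetrically \(g_{u_k}+g_{u_k-u}\) is one of \(u\). Minimality of \(g_{u_k}\) and \(g_u\) then gives both one-sided bounds.

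As a fallback, if this is felt to need more care in the Musielak--Orlicz setting, one can instead invoke the weak lower semicontinuity of Remark~\ref{Remark:WLSC}. Norm convergence implies weak convergence, though this route uses the reflexivity assumption.
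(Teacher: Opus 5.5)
Your convexity argument matches the paper's; the paper just re-derives the convexity of \(\mathcal J_f\) inline instead of citing the preceding Proposition. Your proof of norm-closedness, however, takes a genuinely different route.

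The paper handles closedness in one line by asserting that \(\mathcal J_f\) is sequentially lower semicontinuous. The only justification for this in the paper is Remark~\ref{Remark:WLSC}. That remark obtains weak lower semicontinuity from the Mazur-lemma argument in the proof of Theorem~\ref{thm:Existence}, together with the closure lemma for \(\mathcal H\)-weak upper gradients under \(L^{\mathcal H}\)-convergence. It therefore formally carries that theorem's hypotheses: reflexivity of \(N^{1,\mathcal H}(\Omega)\) and the \(\mathcal H\)-Poincar\'e inequality. None of these appear in the statement.

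You instead prove norm lower semicontinuity directly:
\begin{itemize}
\item the pointwise bound \(|g_{u_k}-g_u|\le g_{u_k-u}\) gives \(g_{u_k}\to g_u\) in \(L^{\mathcal H}(\Omega)\);
\item a subsequence then converges \(\mu\)-a.e., and Fatou's lemma handles the energy term;
\item the boundary term is a bounded linear functional, so it converges.
\end{itemize}
Your justification of the pointwise bound, via sublinearity and minimality, is correct in this setting, since the union of two \(\operatorname{Mod}_{\mathcal H}\)-null curve families is again null.

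Your route needs only that \(\mathcal J_f\) is well defined, i.e.\ the hypotheses of Theorem~\ref{BTO}. With the modular remark, it even gives norm continuity of \(\mathcal J_f\). It also supplies a justification for the closedness of \(N^{1,\mathcal H}_*(\Omega)\), via \(1\in L^{\widetilde{\mathcal H}}(\Omega)\), which the paper only asserts.

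The paper's route is shorter on the page and yields the stronger weak lower semicontinuity, which it needs anyway for existence and stability. For this particular statement, though, yours is the more self-contained and more general argument. Your fallback comment is also right: citing Remark~\ref{Remark:WLSC} imports the extra assumptions of Theorem~\ref{thm:Existence}.
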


\begin{proof}
Let \(u,v\in\mathcal{M}\), \(\lambda\in(0,1)\) and \(w:=\lambda u+(1-\lambda)v\). Then,
\[\int_\Omega w\,d\mu = \lambda\int_\Omega u\,d\mu + (1-\lambda) \int_\Omega v\,d\mu=0.\]
Consequently, \(w\in N^{1,\mathcal H}_*(\Omega)\). Since \(\lambda g_u+(1-\lambda)g_v\) is an \(\mathcal H\)-weak upper gradient of \(w\), \(g_w\le \lambda g_u+(1-\lambda)g_v ~~\mu\text{-a.e. in }\Omega\). We know that \(\xi\mapsto \mathcal{A}(x,\xi)\) is convex and increasing on \([0,\infty)\). Therefore,
\[
\int_\Omega\mathcal{A}(x,g_w)\,d\mu
\le
\lambda\int_\Omega\mathcal{A}(x,g_u)\,d\mu
+
(1-\lambda)\int_\Omega\mathcal{A}(x,g_v)\,d\mu.
\]
Since the trace operator \(T\) is linear, we have
\[\int_{\partial\Omega}fTw\,d\mathscr H = \lambda\int_{\partial\Omega} fTu\,d\mathscr H + (1-\lambda) \int_{\partial\Omega}fTv\,d\mathscr H.\] Adding these two estimates, we get
\(\mathcal{J}_f(w) \le \lambda \mathcal{J}_f(u)+(1-\lambda)\mathcal{J}_f(v)=I(f)\). But \(I(f)\le \mathcal{J}_f(w)\). Therefore, \(\mathcal{J}_f(w)=I(f)\) and hence \(w\in\mathcal{M}\). Thus \(\mathcal{M}\) is convex.

Now let \(\{u_n\}\subset\mathcal{M}\) and suppose that \(u_n\to u ~~\text{in }N^{1,\mathcal H}_*(\Omega)\). Then \(u\in N^{1,\mathcal H}_*(\Omega)\). Since \(\mathcal{J}_f\) is sequentially lower semicontinuous, \( \displaystyle I(f) \le \mathcal{J}_f(u) \le \liminf_{n\to\infty} \mathcal{J}_f(u_n) =I(f)\). Therefore, \(\mathcal{J}_f(u)=I(f)\) and hence \(u\in\mathcal{M}\). Thus \(\mathcal{M}\) is norm-closed.
\end{proof}

Although the minimizer of \(\mathcal{J}_f\) is not necessarily unique, the following theorem shows that any two minimizers have minimal \(\mathcal H\)-weak upper gradients that agree \(\mu\)-almost everywhere.

\begin{Theorem}\label{Theorem 4.2}
Let \(\mathcal{M} := \left\{ u\in N^{1,\mathcal H}_*(\Omega):\mathcal{J}_f(u)= I(f) \right\}\). If \(u,v\in\mathcal{M}\), then \( g_u=g_v ~~ \mu\text{-a.e. in }\Omega\).
\end{Theorem}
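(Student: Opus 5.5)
The plan is to use strict convexity of \(\xi\mapsto\mathcal A(x,\xi)\) together with the convexity argument already used in Theorem~\ref{CCM}. Let \(u,v\in\mathcal M\) and set \(w:=\tfrac12(u+v)\). By Theorem~\ref{CCM}, \(w\in\mathcal M\), so \(\mathcal J_f(w)=I(f)\). The function \(\tfrac12(g_u+g_v)\) is an \(\mathcal H\)-weak upper gradient of \(w\), so \(g_w\le\tfrac12(g_u+g_v)\) \(\mu\)-a.e. Since \(\mathcal A(x,\cdot)\) is increasing and convex, we get the pointwise chain
\[
\mathcal A(x,g_w)\le \mathcal A\bigl(x,\tfrac12(g_u+g_v)\bigr)\le \tfrac12\mathcal A(x,g_u)+\tfrac12\mathcal A(x,g_v)\quad\mu\text{-a.e.}
\]

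Next I use linearity of the trace from Theorem~\ref{BTO}, which gives \(\int_{\partial\Omega}fTw\,d\mathscr H=\tfrac12\int_{\partial\Omega}fTu\,d\mathscr H+\tfrac12\int_{\partial\Omega}fTv\,d\mathscr H\). The two boundary terms are finite by the generalized Hölder inequality, so they can be cancelled. From \(\mathcal J_f(w)=\tfrac12\mathcal J_f(u)+\tfrac12\mathcal J_f(v)=I(f)\) it then follows that
\[
\int_\Omega\Bigl[\tfrac12\mathcal A(x,g_u)+\tfrac12\mathcal A(x,g_v)-\mathcal A\bigl(x,\tfrac12(g_u+g_v)\bigr)\Bigr]d\mu
\le \int_\Omega\Bigl[\tfrac12\mathcal A(x,g_u)+\tfrac12\mathcal A(x,g_v)-\mathcal A(x,g_w)\Bigr]d\mu=0 .
\]
All integrals here are finite, because \(\mathcal A\le\mathcal H\) and \(g_u,g_v\in L^{\mathcal H}(\Omega)\). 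The integrand on the left is nonnegative \(\mu\)-a.e. by convexity, and the integral is at most \(0\), so the integrand vanishes \(\mu\)-a.e.

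Finally, I use strict convexity. For each \(x\), \(\xi\mapsto\xi^{p(x)}/p(x)\) is strictly convex since \(p(x)>1\), and \(a(x)\xi^{q(x)}/q(x)\) is convex. Hence \(\mathcal A(x,\cdot)\) is strictly convex on \([0,\infty)\). Equality in the midpoint convexity inequality therefore forces \(g_u(x)=g_v(x)\) at \(\mu\)-a.e. \(x\), which is the claim.

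The main obstacle is the bookkeeping needed to make the subtraction legitimate: I must check that every term is finite, so that no \(\infty-\infty\) occurs, and that the null sets on which the pointwise inequalities fail are combined into a single \(\mu\)-null set. Both points are routine given \(g_u,g_v\in L^{\mathcal H}(\Omega)\) and \(f\in L^{\widetilde{\mathcal H}}(\partial\Omega)\).
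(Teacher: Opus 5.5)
Your proposal is correct and follows essentially the same route as the paper: set $w=\tfrac12(u+v)\in\mathcal M$, use $g_w\le\tfrac12(g_u+g_v)$, the monotonicity and strict convexity of $\mathcal A(x,\cdot)$, and the linearity of the trace. The only difference is that you argue directly, showing that the nonnegative convexity defect integrates to zero, whereas the paper argues by contradiction. Your version avoids the paper's unjustified strict inequality $\mathcal A(x,g_w)<\mathcal A\bigl(x,\tfrac12(g_u+g_v)\bigr)$ (only $\le$ holds there), and it makes the finiteness bookkeeping explicit.
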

\begin{proof}
Let \(u,~v \in \mathcal{M}\) and \(w:=\frac{u+v}{2}\). By Lemma \ref{CCM}, \(w\in\mathcal{M}\). Moreover, by sublinearity of  \(\mathcal H\)-weak upper gradients, \(g_w\le\frac{g_u+g_v}{2} ~~\mu\text{-a.e. in }\Omega\). Set \(E:=\{x\in\Omega:g_u(x)\ne g_v(x)\}\) and assume to the contrary that \(\mu(E)>0\). Since \(\xi\mapsto \mathcal{A}(x,\xi)\) is strictly convex and strictly increasing on \([0,\infty)\),  \[\mathcal{A}(x,g_w) < \mathcal{A}\left(x,\frac{g_u+g_v}{2}\right) < \frac{\mathcal{A}(x,g_u)+\mathcal{A}(x,g_v)}{2}~~\text{for \(\mu\)-a.e.}~~ x\in E.\] Therefore,
\[\int_\Omega\mathcal{A}(x,g_w)\,d\mu < \frac{1}{2} \int_\Omega\mathcal{A}(x,g_u)\,d\mu
+\frac{1}{2} \int_\Omega\mathcal{A}(x,g_v)\,d\mu.\] By the linearity of the trace operator,
\[\int_{\partial\Omega}fTw\,d\mathscr H = \frac{1}{2} \int_{\partial\Omega} fTu\,d\mathscr H +\frac{1}{2}\int_{\partial\Omega} fTv\,d\mathscr H.\] Adding the two estimates, we obtain
\(\mathcal{J}_f(w) <\frac{1}{2}\mathcal{J}_f(u)+\frac{1}{2}\mathcal{J}_f(v) = I(f).\)  This contradicts the definition of \(I(f)\). Therefore, \(\mu(E)=0\) and hence \({g_u=g_v~~\mu\text{-a.e. in }\Omega.}\)
\end{proof}

\begin{Remark}\label{remark4.2}
If the space \(\Omega\) admits a Cheeger-type differential structure with the corresponding Hilbertian property, then the above uniqueness of the minimal \(\mathcal H\)-weak upper gradient, together with the convexity of \(\mathcal{M}\), implies that the minimizer is unique \(\mu\)-a.e. in \(\Omega\) (see \cite{Maly2018}).
\end{Remark}
\section{Stability of Minimizers with Respect to Neumann Data}
In this section, we study the dependence of minimizers on the Neumann data. This complements the results established in the previous section by examining how minimizers behave when the boundary data are varied.

\begin{Theorem} \label{thm:stability}
Assume that the conditions of Proposition \ref{coercivity} are satisfied and \(N^{1,\mathcal H}(\Omega)\) is reflexive. Let \((f_k)\subset L^{\widetilde{\mathcal{H}}}(\partial\Omega)\) satisfy \( \int_{\partial\Omega}f_k\,d\mathscr{H}=0\) for every \(k\in\mathbb N,~~f_k\to f~~\text{in }L^{\widetilde{\mathcal{H}}}(\partial\Omega)\). Suppose \(u_k\in N^{1,\mathcal{H}}_*(\Omega)\) satisfy \(\mathcal{J}_{f_k}(u_k)=I(f_k)\). Then \(\int_{\partial\Omega}f\,d\mathscr{H}=0\), and there exist \(N(k)\ge k\) and \(\lambda_{j,k}\ge0\), \(j=k,\ldots,N(k)\), with \( \sum_{j=k}^{N(k)}\lambda_{j,k}=1\), such that \(  \sum_{j=k}^{N(k)}\lambda_{j,k}u_j\) converges strongly in \(N^{1,\mathcal{H}}(\Omega)\) to some \(u\in N^{1,\mathcal{H}}_*(\Omega)\), and \(\mathcal{J}_f(u)=I(f)\).
\end{Theorem}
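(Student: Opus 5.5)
The plan is a direct-method argument carried out uniformly in \(k\): bound the minimizers \(u_k\) uniformly, extract a weak limit, upgrade it to strong convergence of convex combinations via Mazur's lemma, and show the limit minimizes \(\mathcal J_f\) by comparing \(\mathcal J_f\) with \(\mathcal J_{f_k}\).

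\textbf{Step 1: compatibility of \(f\).} Since \(\partial\Omega\) is bounded, the upper codimension-\(\theta\) bound \eqref{eq:upper-codimension} gives \(\mathscr H(\partial\Omega)<\infty\). Hence the constant \(1\) lies in \(L^{\mathcal H}(\partial\Omega)\). The generalized H\"older inequality then yields
\[
\Bigl|\int_{\partial\Omega} f\,d\mathscr H\Bigr|=\Bigl|\int_{\partial\Omega}(f-f_k)\,d\mathscr H\Bigr|\le 2\|f-f_k\|_{L^{\widetilde{\mathcal H}}(\partial\Omega)}\|1\|_{L^{\mathcal H}(\partial\Omega)}\to0 .
\]
Therefore \(\int_{\partial\Omega}f\,d\mathscr H=0\), and \(\mathcal J_f\) and \(I(f)\) are well defined.

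\textbf{Step 2: uniform bound on \((u_k)\).} I expect this to be the main obstacle, because coercivity has to hold uniformly in \(k\). Since \(0\in N^{1,\mathcal H}_*(\Omega)\), we have \(\mathcal J_{f_k}(u_k)=I(f_k)\le \mathcal J_{f_k}(0)=0\). I will rerun the proof of Proposition \ref{coercivity} with explicit constants. The global \(\mathcal H\)-Poincar\'e inequality and Theorem \ref{BTO} give
\[
\Bigl|\int_{\partial\Omega}f_kTu_k\,d\mathscr H\Bigr|\le C\|f_k\|_{L^{\widetilde{\mathcal H}}(\partial\Omega)}\|g_{u_k}\|_{L^{\mathcal H}(\Omega)},
\]
where \(C\) is independent of \(k\). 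Because \(f_k\to f\), the norms \(\|f_k\|_{L^{\widetilde{\mathcal H}}(\partial\Omega)}\) are bounded by some \(K\). Suppose \(\|g_{u_k}\|_{L^{\mathcal H}(\Omega)}>1\). Lemma \ref{ModEst} then gives
\[
0\ge \mathcal J_{f_k}(u_k)\ge \tfrac1{q^+}\|g_{u_k}\|^{p^-}-CK\|g_{u_k}\| .
\]
Since \(p^->1\), this forces \(\|g_{u_k}\|_{L^{\mathcal H}(\Omega)}\le R\) for some \(R\) independent of \(k\). The Poincar\'e inequality then bounds \(\|u_k\|_{N^{1,\mathcal H}(\Omega)}\) uniformly.

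\textbf{Step 3: extraction and Mazur.} By reflexivity of \(N^{1,\mathcal H}_*(\Omega)\), a subsequence \(u_{k_m}\) converges weakly to some \(u\in N^{1,\mathcal H}_*(\Omega)\). Mazur's lemma in the Banach space \(N^{1,\mathcal H}(\Omega)\) gives convex combinations of tails \(\{u_{k_m}\}_{m\ge n}\) converging strongly to \(u\). Putting zero weight on indices outside the subsequence produces the stated weights \(\lambda_{j,k}\), \(j=k,\dots,N(k)\). The limit \(u\) lies in \(N^{1,\mathcal H}_*(\Omega)\) because this subspace is closed.

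\textbf{Step 4: minimality of \(u\).} Fix \(v\in N^{1,\mathcal H}_*(\Omega)\). By Remark \ref{Remark:WLSC}, \(\mathcal J_f(u)\le\liminf_m \mathcal J_f(u_{k_m})\). Next write
\[
\mathcal J_f(u_k)=\mathcal J_{f_k}(u_k)+\int_{\partial\Omega}(f-f_k)Tu_k\,d\mathscr H .
\]
The last term is at most \(2\|f-f_k\|_{L^{\widetilde{\mathcal H}}(\partial\Omega)}\|T\|\sup_k\|u_k\|_{N^{1,\mathcal H}(\Omega)}\), which tends to \(0\) by Step 2. Minimality of \(u_k\) gives \(\mathcal J_{f_k}(u_k)\le \mathcal J_{f_k}(v)\). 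Moreover
\[
\mathcal J_{f_k}(v)-\mathcal J_f(v)=\int_{\partial\Omega}(f_k-f)Tv\,d\mathscr H\to0
\]
by the same H\"older estimate. Combining these,
\[
\mathcal J_f(u)\le\liminf_m \mathcal J_{f_{k_m}}(u_{k_m})\le \mathcal J_f(v).
\]
Hence \(\mathcal J_f(u)=I(f)\). As a by-product, \(I(f_k)\to I(f)\) along the subsequence.
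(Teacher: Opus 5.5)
Your proposal is correct and follows essentially the same route as the paper: compatibility of $f$ via H\"older with $\|1\|_{L^{\mathcal H}(\partial\Omega)}<\infty$, a uniform bound on $(u_k)$, weak extraction, the comparison $\mathcal J_f(u_k)-\mathcal J_{f_k}(u_k)\to 0$ combined with the weak lower semicontinuity of Remark~\ref{Remark:WLSC}, and Mazur's lemma. Your Step~2 spells out the uniformity in $k$ of the coercivity estimate (via $\sup_k\|f_k\|_{L^{\widetilde{\mathcal H}}(\partial\Omega)}<\infty$), which the paper only asserts, and your zero-weight bookkeeping correctly passes from the extracted subsequence to convex combinations indexed by $j=k,\dots,N(k)$ in the original sequence.
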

\begin{proof}
By the generalized H\"older inequality,
\[
\left|
\int_{\partial\Omega}(f_k-f)\,d\mathscr H
\right|
\le
C\|f_k-f\|_{L^{\widetilde{\mathcal H}}(\partial\Omega)}
\|1\|_{L^{\mathcal H}(\partial\Omega)}.
\]
Since \(\Omega\) is bounded, there exist \(x_0\in\partial\Omega\) and \(R>0\) such that \(\partial\Omega\subset B(x_0,R)\). Hence, by the upper codimension-\(\theta\) condition \eqref{eq:upper-codimension}, \(\mathscr H(\partial\Omega) \le C R^{-\theta}\mu(B(x_0,R)\cap\Omega) <\infty\). Thus \(1\in L^{\mathcal H}(\partial\Omega)\) and
\(\|1\|_{L^{\mathcal H}(\partial\Omega)}<\infty\). As \(f_k\to f\) in \(L^{\widetilde{\mathcal H}}(\partial\Omega)\), it follows that \(\int_{\partial\Omega}f\,d\mathscr H=0\). Since \(u_k\) is a minimizer of \(\mathcal J_{f_k}\) and
\(0\in N^{1,\mathcal H}_*(\Omega)\), \(I(f_k)=\mathcal J_{f_k}(u_k) \le \mathcal J_{f_k}(0)=0\). From Proposition \ref{coercivity} and the global \(\mathcal H\)-Poincar\'e inequality, we have
\[
\|u_k\|_{L^{\mathcal H}(\Omega)}
\le C\|g_{u_k}\|_{L^{\mathcal H}(\Omega)}
\le C.
\]
Consequently, \((u_k)\) is bounded in \(N^{1,\mathcal H}(\Omega)\). By reflexivity, passing to a subsequence and relabeling it as \((u_k)\), we may assume that \(u_k\rightharpoonup u~~\text{weakly in }N^{1,\mathcal H}(\Omega)\)
for some \(u\in N^{1,\mathcal H}(\Omega)\). Since \(N^{1,\mathcal H}_*(\Omega)\) is a closed subspace of
\(N^{1,\mathcal H}(\Omega)\) and \(u_k\in N^{1,\mathcal H}_*(\Omega)\), we have \(u\in N^{1,\mathcal H}_*(\Omega)\).

We next show that \(I(f_k)\rightarrow I(f)\). Fix \(\varepsilon>0\). By the definition of \(I(f)\), there exists
\(v_\varepsilon\in N^{1,\mathcal H}_*(\Omega)\) such that \(\mathcal J_f(v_\varepsilon)<I(f)+\varepsilon\). Since \(v_\varepsilon\) is admissible for \(\mathcal J_{f_k}\), \(I(f_k)\le\mathcal J_{f_k}(v_\varepsilon)\). Moreover,
\[
\mathcal J_{f_k}(v_\varepsilon)
=
\mathcal J_f(v_\varepsilon)
+
\int_{\partial\Omega}(f_k-f)Tv_\varepsilon\,d\mathscr H.
\]
By the generalized H\"older inequality and the boundedness of the trace operator,
\[
\begin{aligned}
\left|
\int_{\partial\Omega}(f_k-f)Tv_\varepsilon\,d\mathscr H
\right|
&\le
C\|f_k-f\|_{L^{\widetilde{\mathcal H}}(\partial\Omega)}
\|Tv_\varepsilon\|_{L^{\mathcal H}(\partial\Omega)}
\rightarrow 0.
\end{aligned}
\]
Therefore, \(\limsup_{k\to\infty}I(f_k) \le \mathcal J_f(v_\varepsilon) < I(f)+\varepsilon\). Letting \(\varepsilon\to 0\), we obtain \(\limsup_{k\to\infty}I(f_k)\le I(f)\). On the other hand, since
\[
\mathcal J_f(u_k)
=
\mathcal J_{f_k}(u_k)
+
\int_{\partial\Omega}(f_k-f)Tu_k\,d\mathscr H,
\]
the boundedness of the trace operator and the boundedness of \((u_k)\)
in \(N^{1,\mathcal H}(\Omega)\) imply
\[
\begin{aligned}
\left|
\int_{\partial\Omega}(f_k-f)Tu_k\,d\mathscr H
\right|
&\le
C\|f_k-f\|_{L^{\widetilde{\mathcal H}}(\partial\Omega)}
\|Tu_k\|_{L^{\mathcal H}(\partial\Omega)}
\rightarrow 0.
\end{aligned}
\]
Hence \(\mathcal J_f(u_k)-\mathcal J_{f_k}(u_k)\rightarrow0\), and, since \(\mathcal J_{f_k}(u_k)=I(f_k)\), \(\liminf_{k\to\infty}\mathcal J_f(u_k) = \liminf_{k\to\infty}I(f_k)\). By Remark \ref{Remark:WLSC}, \(\mathcal J_f(u)
\le
\liminf_{k\to\infty}\mathcal J_f(u_k)
=
\liminf_{k\to\infty}I(f_k)\). Since \(u\in N^{1,\mathcal H}_*(\Omega)\), \(I(f)\le\mathcal J_f(u)\). Consequently,
\( I(f)
\le
\mathcal J_f(u)
\le
\liminf_{k\to\infty}I(f_k) .\) Combining this with \(\limsup_{k\to\infty}I(f_k)\le I(f)\), we obtain \(I(f_k)\rightarrow I(f)\) and \(\mathcal J_f(u)=I(f)\). Finally, since \(u_k\rightharpoonup u\) weakly in the Banach space
\(N^{1,\mathcal H}(\Omega)\), Mazur's lemma yields, for every
\(k\in\mathbb N\), an integer \(N(k)\ge k\) and coefficients
\(\lambda_{j,k}\ge0\), \(j=k,\ldots,N(k)\), such that
\[
\sum_{j=k}^{N(k)}\lambda_{j,k}=1~~\text{and}~~ \sum_{j=k}^{N(k)}\lambda_{j,k}u_j
\rightarrow u~~\text{ in \(N^{1,\mathcal H}(\Omega)\)}.\]
Since \(N^{1,\mathcal H}_*(\Omega)\) is a linear subspace and
\(u_j\in N^{1,\mathcal H}_*(\Omega)\), each convex combination belongs
to \(N^{1,\mathcal H}_*(\Omega)\). Its strong limit therefore also
belongs to \(N^{1,\mathcal H}_*(\Omega)\). Thus \(\mathcal J_f(u)=I(f)\).
\end{proof}
\section*{Data Availability:}
No data was used for the research described in the article.
\small
\bibliographystyle{elsarticle-harv}
\bibliography{refrences}

\end{document}